\documentclass[12pt]{amsart}
\usepackage{amsmath,amsthm,amsfonts,amssymb,amscd}
\usepackage{amsrefs}
\usepackage[mathscr]{euscript}
\usepackage[utf8]{inputenc}
\usepackage{hyperref}
\usepackage[usenames,dvipsnames]{color}

\newtheorem{theorem}{Theorem}
\newtheorem*{theorem*}{Theorem}
\newtheorem{proposition}[theorem]{Proposition}
\newtheorem{lemma}[theorem]{Lemma}

\theoremstyle{definition}
\newtheorem{definition}[theorem]{Definition}
\newtheorem*{definition*}{Definition}
\newtheorem{example}[theorem]{Example}

\theoremstyle{remark}
\newtheorem{remark}[theorem]{Remark}

\newcommand{\re}{\mathrm{Re}}

\newcommand{\binomial}[2]{\ensuremath{\left( \begin{matrix}#1 \\ #2 \end{matrix} \right)}}

\newcommand{\dd}{\ensuremath{\mathrm d}}

\newcommand{\funcao}[5]{\ensuremath{
\begin{array}{rccl}
#1 : & #2 & \rightarrow & #3,\\ & #4 & \mapsto & #5
\end{array}}}

\renewcommand{\dim}{\ensuremath{\mathrm{dim}_{\mathbb C}}}

\author{Gregorio Malajovich}
\thanks{Partially supported by CNPq, CAPES (Brasil)
and by MathAmSud international cooperation grant {\em Complexity}.}
\address{
Departamento de Matemática Aplicada, 
Instituto de Matemática da UFRJ. C.P.68530,
Rio de Janeiro, RJ 
21941-909, Brasil}
\urladdr{http://www.labma.ufrj.br/~gregorio}
\email{gregorio.malajovich@gmail.com}
\title{On the expected number of zeros of nonlinear equations}
\date{June 27,2013}
\subjclass[2010]{32A60,65H10}
\keywords{Nonlinear equations, Complex fewnomials, Gaussian analytic functions}
\begin{document}
\maketitle
\begin{abstract}
This paper investigates the expected number of complex roots of nonlinear equations.
Those equations are assumed to be analytic, and to belong to
certain inner product spaces. Those spaces are then endowed with
the Gaussian probability distribution.
 
The root count on a given domain is proved to be `additive' with
respect to a product operation of functional spaces. This allows
to deduce a general theorem 
relating the expected number of roots for unmixed and mixed 
systems. Examples of root counts for equations 
that are not polynomials nor exponential sums are given at the end.
\end{abstract}
\section{Introduction}

We consider systems of analytic equations
of the form
\[
f_1(\mathbf x) = \cdots = f_n(\mathbf x) = 0
\]
where $\mathbf x$ is assumed to belong to a complex $n$-dimensional 
manifold $M$. Each $f_i$ 
belongs to a {\em fewspace} or 
{\em space of complex fewnomials} $\mathscr F_i$. Fewspaces are
complex inner product spaces of analytic functions with certain properties.
The definition is postponed to Sec.\ref{sec:fewspaces}.

Let $n_M(\mathbf f)$ denote the number of isolated roots of the system
above. More generally, let $n_{\mathscr K}(\mathbf f)$ be
the number of isolated
roots in a subset $\mathscr K \subseteq M$. A consequence of Brouwer's degree theorem
is that when $\mathscr K$ is open, the number $n_{\mathscr K}(\mathbf f)$ is
lower semi-continuous as a function of $\mathbf f$ (details in
~\cite{NONLINEAR-EQUATIONS}*{Ch.3}).
\medskip
\par
When the $\mathscr F_i$ are spaces of polynomials (resp. Laurent polynomials)
and $M=\mathbb C^n$
(resp. $M=(\mathbb C \setminus\{0\})^n$), the number $n_M(\mathbf f)$ is
known to attain its maximum {\em generically}, that is
for all $\mathbf f$ except in a complex codimension 1 (hence measure zero)
variety. 
Bounds for this maximum are known, and some of them are exact.

For instance, let $\mathscr F_A$ be the set of Laurent polynomials
with support $A$, viz.
\[
f(\mathbf x) = \sum_{\mathbf a \in A} f_{\mathbf a} 
x_1^{a_1} x_2^{a_2} \cdots x_n^{a_n},
\]
where $A \subset \mathbb Z^n$ is assumed to be finite and $f_{\mathbf a}
\in \mathbb C$. The inner product in $\mathscr F_A$ is arbitrary.
Let $\mathscr A$ denote the convex hull of $A$.

\begin{theorem}[Kushnirenko~\cite{Kushnirenko}]\label{th:kushnirenko}
Let $f_1, \cdots, f_n \in \mathscr F_A$.
For a generic choice of coefficients $f_{i \mathbf a} \in \mathbb C$,
\[
n_{ (\mathbb C \setminus\{0\})^n } (\mathbf f) = 
n!\ \mathrm{Vol}(\mathscr A)
.
\]
\end{theorem}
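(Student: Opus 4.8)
The plan is to reduce to Bernstein's toric deformation argument and count roots simplex by simplex. First I would record two reductions that use only invariances of both sides of the claimed identity. Multiplying every $f_i$ by a fixed monomial $x^{\mathbf b}$ leaves the roots in $(\mathbb C\setminus\{0\})^n$ unchanged, since monomials are units on the torus, while translating $A$, hence $\mathscr A$, by $\mathbf b$; as $\mathrm{Vol}$ is translation invariant, I may assume $\mathbf 0 \in A$. If the affine lattice generated by $A$ has rank $< n$, a monomial change of coordinates coming from $\mathrm{GL}_n(\mathbb Z)$ turns the system into one involving fewer than $n$ variables, so it has no isolated torus roots for any coefficients, while $\mathrm{Vol}(\mathscr A) = 0$; thus I may assume $\mathscr A$ is full dimensional. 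Finally, by the lower semicontinuity of $n_{(\mathbb C\setminus\{0\})^n}$ recalled above, it suffices to exhibit one system with exactly $n!\,\mathrm{Vol}(\mathscr A)$ simple isolated torus roots and to bound the count above by $n!\,\mathrm{Vol}(\mathscr A)$ for every system with finitely many roots.

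For the main argument I would introduce a generic lifting function $\omega\colon A \to \mathbb Z_{\ge 0}$ and the one-parameter family
\[
f_i^{(t)}(\mathbf x) = \sum_{\mathbf a \in A} f_{i\mathbf a}\, t^{\omega(\mathbf a)} x^{\mathbf a}, \qquad t \in (0,1].
\]
For $\omega$ in general position the lower faces of the lifted configuration $\{(\mathbf a, \omega(\mathbf a)) : \mathbf a \in A\}$ project to a regular triangulation $\mathcal T$ of $\mathscr A$ into lattice simplices, so $\sum_{\sigma \in \mathcal T}\mathrm{Vol}(\sigma) = \mathrm{Vol}(\mathscr A)$. The idea is that as $t \to 0$ the torus roots of $\mathbf f^{(t)}$ split into clusters, one per full-dimensional simplex $\sigma \in \mathcal T$, with the roots attached to $\sigma$ governed to leading Puiseux order by the subsystem supported on the vertices of $\sigma$.

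The per-simplex count is then pure lattice geometry. Fix a simplex $\sigma$ with vertices $\mathbf a_0, \dots, \mathbf a_n$; after the monomial substitution $y_j = x^{\mathbf a_j - \mathbf a_0}$ the leading subsystem becomes linear in $(y_1,\dots,y_n)$ and, for generic coefficients, has a unique nonzero solution. The number of $\mathbf x \in (\mathbb C\setminus\{0\})^n$ mapping to that point equals the index of the sublattice generated by $\mathbf a_1 - \mathbf a_0, \dots, \mathbf a_n - \mathbf a_0$, namely $|\det(\mathbf a_1 - \mathbf a_0, \dots, \mathbf a_n - \mathbf a_0)| = n!\,\mathrm{Vol}(\sigma)$, which I would verify from the Smith normal form of the integer matrix whose columns are the $\mathbf a_j - \mathbf a_0$. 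Summing over all simplices of $\mathcal T$ yields the candidate count $\sum_\sigma n!\,\mathrm{Vol}(\sigma) = n!\,\mathrm{Vol}(\mathscr A)$.

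The hard part is the continuation step that converts these local $t \to 0$ counts into an exact count at $t = 1$. I would argue that for generic coefficients and generic $\omega$ the family $\mathbf f^{(t)}$ is a nondegenerate homotopy on the torus: no root escapes to a coordinate hyperplane or to infinity as $t$ ranges over $(0,1]$, so the number of torus roots is constant along the family and equals its $t \to 0$ limit. Controlling this — ruling out roots running off to the toric boundary and showing that the Puiseux expansions are in bijection with the (simplex, leading-solution) pairs counted above — is the real obstacle, and is precisely where the full-dimensionality reduction and the genericity of the coefficients are needed to guarantee transversality. Together with the matching upper bound, which follows by the same valuation and Newton-polytope analysis applied to an arbitrary system with isolated roots, this establishes that the generic torus root count is exactly $n!\,\mathrm{Vol}(\mathscr A)$.
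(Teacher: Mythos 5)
First, a point of comparison: the paper does not prove Theorem~\ref{th:kushnirenko} at all. It is quoted as a classical result with a citation to Kushnirenko, and the only related material in the paper runs in the opposite direction (Example~\ref{ex:KtoB} and Remark~\ref{rem:KtoB} derive Bernstein's theorem \emph{from} Kushnirenko's theorem together with Theorem~\ref{th:main}). So there is no in-paper proof to measure yours against, and I can only judge the sketch on its own terms. The route you chose --- generic lifting, regular triangulation of $\mathscr A$, a per-simplex count via the lattice index $|\det(\mathbf a_1-\mathbf a_0,\dots,\mathbf a_n-\mathbf a_0)| = n!\,\mathrm{Vol}(\sigma)$, then continuation back to $t=1$ --- is a standard and viable strategy (the polyhedral homotopy specialized to the unmixed case), and your reductions and the per-simplex computation are correct.

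That said, the argument has a genuine hole exactly where you say the work is, and flagging a step as ``the real obstacle'' does not discharge it. (i) The continuation step needs a proof that the number of torus roots of $\mathbf f^{(t)}$, counted with multiplicity, is constant on $(0,1]$ and that every branch over $t\to 0$ has a Puiseux expansion whose leading exponent is captured by exactly one simplex of $\mathcal T$; ruling out escape to the toric boundary requires compactifying (e.g.\ taking the closure of the incidence variety in the toric variety of $\mathscr A$ times the $t$-line and checking properness, or working over the Puiseux series field and showing every root's valuation is realized on the lower hull). Without this, the clusters you count need not exhaust the roots at $t=1$. (ii) The upper bound cannot ``follow by the same valuation analysis applied to an arbitrary system'': a fixed system at $t=1$ has no deformation parameter to take valuations of. The bound $n_{(\mathbb C\setminus\{0\})^n}(\mathbf f)\le n!\,\mathrm{Vol}(\mathscr A)$ for every $\mathbf f$ with isolated roots needs its own argument, e.g.\ intersection numbers on the toric compactification or a coefficient--parameter continuation theorem. (iii) Minor but real: lower semicontinuity gives openness in the classical topology, whereas the statement asserts genericity off a codimension-one algebraic set, so you must also identify the exceptional locus as a proper subvariety (a discriminant or resultant locus). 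None of these defeats the strategy, but each must actually be supplied.
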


The case $n=1$ was known to Newton, and 
$n=2$ was published by Minding~\cite{Minding} in 1841.
A system as above, where all the equations have the same support $A$
is said to be {\em unmixed}. Otherwise, the system is said to be 
{\em mixed}.
The following root count for mixed polynomial systems
 was published by Bernstein~\cite{Bernstein} and is
known as the BKK bound (for Bernstein, Kushnirenko and Khovanskii)~\cite{BKK}:

\begin{theorem}[Bernstein]\label{th:bernstein}
Let $A_1, \dots, A_n \subset \mathbb Z^n$ be finite sets. 
Let $\mathscr A_i$ be the
convex hull of $A_i$. 
For a generic choice of coefficients $f_{i \mathbf a} \in \mathbb C$,
\[
n_{ (\mathbb C \setminus\{0\})^n } (\mathbf f) 
\]
is $n!$ times the coefficient $V$ of $\lambda_1 \dots \lambda_n$ in the
polynomial
\[
\frac{1}{n!}\mathrm{Vol} (\lambda_1 \mathscr A_1 + \cdots + \lambda_n \mathscr A_n).
\]
\end{theorem}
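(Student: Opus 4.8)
The plan is to deduce Theorem~\ref{th:bernstein} from Kushnirenko's Theorem~\ref{th:kushnirenko} by recognizing the generic root count as the polarization of the volume. Write $N(\mathscr A_1, \dots, \mathscr A_n)$ for the generic value of $n_{(\mathbb C \setminus \{0\})^n}(\mathbf f)$ as $f_i$ ranges over $\mathscr F_{A_i}$. First I would check this is well defined, i.e.\ that a generic maximum is attained on a dense open set, which follows from the lower semicontinuity noted in the introduction together with an a priori finite upper bound on the count; and that $N$ is symmetric in its arguments, since permuting the equations does not alter the zero set. The quantity $V$ in the statement is, up to the factor $n!$, the classical \emph{mixed volume}, characterized as the unique symmetric function of convex bodies that is translation invariant, reduces to $\mathrm{Vol}$ on the diagonal, and is additive in each argument under Minkowski addition. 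So it suffices to verify these properties for $N$.

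The heart of the matter is the additivity
\[
N(\mathscr A_1 + \mathscr B_1, \mathscr A_2, \dots, \mathscr A_n) = N(\mathscr A_1, \mathscr A_2, \dots, \mathscr A_n) + N(\mathscr B_1, \mathscr A_2, \dots, \mathscr A_n),
\]
where $\mathscr B_1$ is the convex hull of a second support $B_1$ and $\mathscr A_1 + \mathscr B_1$ is the polytope of the sumset $A_1 + B_1$. For the inequality $\ge$ I would specialize the first equation to a product $f_1 = g\,h$ with $g \in \mathscr F_{A_1}$ and $h \in \mathscr F_{B_1}$ generic: the Newton polytope of a product of Laurent polynomials is the Minkowski sum of the factors' polytopes, so $gh$ lies in $\mathscr F_{A_1 + B_1}$, and the zero locus of $(gh, f_2, \dots, f_n)$ is the union of those of $(g, f_2, \dots, f_n)$ and $(h, f_2, \dots, f_n)$. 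For generic $g, h, f_2, \dots, f_n$ these root sets are finite, of sizes $N(\mathscr A_1, \dots)$ and $N(\mathscr B_1, \dots)$, and disjoint; lower semicontinuity then passes the sum to a generic member of $\mathscr F_{A_1+B_1}$.

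The reverse inequality is where I expect the real difficulty: it is the BKK \emph{upper} bound, asserting that a generic member of $\mathscr F_{A_1 + B_1}$ contributes no more roots than the specialized product does. This does not follow formally from Kushnirenko and semicontinuity — the count can only jump \emph{up} at special fibers, so one cannot simply take limits — and genuinely new input is required. I would obtain it either from a toric compactification argument, counting intersection numbers on the toric variety of the Minkowski sum where the generic system meets no boundary divisor, or, in the spirit of this paper, from an integral-geometric additivity of the \emph{expected} count under the product operation on fewspaces, which yields the equality directly rather than as two inequalities; one then uses that for polynomial fewspaces the expected count over the Gaussian ensemble coincides with the generic count, the exceptional locus having measure zero.

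With additivity in hand, translation invariance is immediate, since translating a support multiplies the equation by a fixed monomial $x^v$, which is nonvanishing on the torus and so does not change the zeros; and Kushnirenko supplies the diagonal normalization $N(\mathscr A, \dots, \mathscr A) = n!\,\mathrm{Vol}(\mathscr A)$. By the uniqueness characterization of the mixed volume it follows that $N(\mathscr A_1, \dots, \mathscr A_n) = n!\,V$, which is exactly the claim: expanding $\mathrm{Vol}(\lambda_1 \mathscr A_1 + \cdots + \lambda_n \mathscr A_n)$ by multiadditivity identifies $V$ with the normalized mixed volume, and the coefficient of $\lambda_1 \cdots \lambda_n$ carries the combinatorial factor $n!$ recorded in the statement.
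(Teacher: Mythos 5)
Your outline is correct, and the branch you recommend for the crucial additivity step is exactly the paper's mechanism, so the two arguments are close relatives; still, the packaging differs enough to be worth comparing. The paper does not verify the axioms characterizing mixed volume. Instead it proves Lemma~\ref{lemma:main} (additivity of the \emph{expected} count under the Aronszajn product of fewspaces, coming from $K_{\mathscr E \mathscr F}=K_{\mathscr E}K_{\mathscr F}$, hence $\omega_{\mathscr E \mathscr F}=\omega_{\mathscr E}+\omega_{\mathscr F}$ in the root-density integral), iterates it to get Theorem~\ref{th:main}, identifies $\mathscr F_{A_1}^{\lambda_1}\cdots\mathscr F_{A_n}^{\lambda_n}$ with $\mathscr F_{\lambda_1 A_1+\cdots+\lambda_n A_n}$ (Example~\ref{ex:KtoB}), applies Kushnirenko (Theorem~\ref{th:kushnirenko}) to that unmixed space, and reads off the coefficient of $\lambda_1\cdots\lambda_n$ (Remark~\ref{rem:KtoB}); your appeal to the uniqueness characterization of mixed volume is the same polarization identity in different clothing. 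Two of your ingredients are extraneous relative to the paper: the specialization $f_1=gh$ with semicontinuity (it yields only one inequality and is subsumed once you invoke the integral-geometric equality), and the toric-compactification alternative (a genuinely different, purely algebraic route the paper avoids). Two points you gesture at should be made explicit to close the argument, and both are where the paper spends its effort: (i) that the Aronszajn product $\mathscr F_{A}\mathscr F_{B}$ is $\mathscr F_{A+B}$ with certain positive weights, and that the generic count depends only on the support and not on the inner product, so the weights are harmless (Section~\ref{sec:product}); (ii) the passage between expected and generic counts, which requires not only that the exceptional locus have measure zero but also that the count be bounded --- your semicontinuity observation supplies the bound, since the superlevel sets of $n_{\mathscr K}$ are open and so no value can exceed the one attained on a dense set.
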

This number $V$ is known as the {\em mixed volume} of the tuple of convex
bodies $(\mathscr A_1, \dots, \mathscr A_n)$. 
\medskip
\par
Recently, Kaveh and Khovanskii ~\cite{KKh2010, KKh2012} 
generalized those results to the situation where
$M$ is a variety or a quasi-projective variety and each space $\mathscr F_i$ is
a certain subspace of regular functions of $M$. 
\medskip
\par
The objective of this paper is to extend the results above to more
general spaces of analytic equations. For instance, we would like 
to count zeros of equations such as
\begin{equation}\label{example1}
\begin{split}
f_{00} + f_{01}x + f_{02}x^2 + \cdots + f_{0d} x^d
+\hspace{7em}&\\
+ f_{10}e^x + f_{11}xe^x + f_{12}x^2e^x + \cdots + f_{1d} x^de^x
& = 0.
\end{split}
\end{equation}

It is easy to see that the number of solutions in $\mathbb C$ for
(say) $d=0$ is infinite. However, we can inquire about the number of
solutions in a smaller set, like the disk $\mathscr D= \{ x \in \mathbb C:
|x|<1\}$.  

The {\em generic} number of zeros exists no more. Instead,
we endow the space of equations with a probability measure defined as
follows: we assume the space of equations $\mathscr F$ to be an inner product space
of {\em complex} dimension $\dim(\mathscr F)$. 
Then the standard 
Gaussian (or {\em normal}) distribution is given by the probability density function
\[
\frac{1}{(\pi)^{\dim(\mathscr F)}} e^{-\|f\|^2} \mathrm d\mathscr F(f)
.
\]
Alternatively, assuming an orthonormal coordinate system on $\mathscr F$, the 
coordinates
of $f$ are iid complex Gaussians with zero average and unit variance.
Assuming this probability density function for the {\em random variable} $f$, we 
compute the {\em expected} number of isolated roots.

In the example above, the expected root count is 
\[
\mathbb E (n_{\mathscr D} (f)) = d/2 + 
0.202,918,921,282 \cdots
\]
(see Section~\ref{sec:product} for the precise inner product we are using).
The constant $0.202 \cdots$ was obtained numerically. I would like
to thank Steven Finch for pointing out an error in the 4-th
decimal of a previous
computation, and giving the correct decimal expansion. 

This and other examples are worked out in Section~\ref{sec:examples}
\medskip
\par

It turns out that complex fewnomial spaces are reproducing kernel spaces.
A meaningful multiplication operation
between reproducing kernel spaces was studied by Aronszajn~\cite{Aronszajn}
(see Section~\ref{sec:product}). We denote the
product space of $\mathscr F$ and  $\mathscr G$ by $\mathscr F \mathscr G$.
If $\lambda \in \mathbb N$, we denote
the $\lambda$-th power of $\mathscr F$ by $\mathscr F^{\lambda}$.
The main result in this paper is an analog to Bernstein's theorem.
However, there is no more an interpretation of the number of roots in
terms of a volume of a convex body (Minding, Kushnirenko, or Okounkov~\cite{Okounkov,
KKh2012}) or
in terms of mixed volume. But the relation between root counts in
mixed and unmixed systems is preserved. 

\begin{theorem}\label{th:main}
Let $\mathscr F_1, \dots, \mathscr F_n$ be finite dimensional 
fewspaces of functions of $M$, endowed with 
standard Gaussian probability distributions. Let $\mathscr K \subseteq M$ be measurable. Then,
\[
\mathbb E_{
f_1 \in \mathscr F_1,
\dots,
f_n \in \mathscr F_n}
(n_{\mathscr K} (\mathbf f))
\]
is the coefficient of $\lambda_1 \lambda_2 \cdots \lambda_n$ in
the $n$-th degree homogeneous polynomial
\[
\frac{1}{n!}
\mathbb E_{
g_1, \dots, g_n
\in \mathscr F_1^{\lambda_1} \mathscr F_2^{\lambda_2} \cdots \mathscr F_n^{\lambda_n}}
(n_{\mathscr K} (\mathbf g))
\]
where the standard Gaussian probability distribution
is assumed in each
$\mathscr F_1^{\lambda_1} \mathscr F_2^{\lambda_2} \cdots \mathscr F_n^{\lambda_n}$.
\end{theorem}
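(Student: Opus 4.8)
The plan is to treat the expected root count as a symmetric, multi-additive function of the tuple of fewspaces, and then to recover the statement by polarization---precisely the mechanism by which the mixed volume is extracted from the volume polynomial in Bernstein's Theorem~\ref{th:bernstein}. Write $\mathcal{N}(\mathscr{G}_1,\dots,\mathscr{G}_n)$ for $\mathbb{E}_{g_i\in\mathscr{G}_i}(n_{\mathscr{K}}(\mathbf g))$, so that the quantity to be identified is $\mathcal{N}(\mathscr{F}_1,\dots,\mathscr{F}_n)$, while the polynomial under consideration is $\frac{1}{n!}\mathcal{N}(P,\dots,P)$ with $P=\mathscr{F}_1^{\lambda_1}\cdots\mathscr{F}_n^{\lambda_n}$ placed in all $n$ slots.

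Two properties of $\mathcal{N}$ do all the work. The first is \emph{symmetry}: since the number of isolated roots of a system does not depend on the order in which the equations are listed, $\mathcal{N}(\mathscr{G}_{\sigma(1)},\dots,\mathscr{G}_{\sigma(n)})=\mathcal{N}(\mathscr{G}_1,\dots,\mathscr{G}_n)$ for every permutation $\sigma$. The second is \emph{additivity in each slot}: the additivity of the root count with respect to the product of fewspaces (the main technical result announced in the abstract) gives $\mathcal{N}(\dots,\mathscr{G}\mathscr{H},\dots)=\mathcal{N}(\dots,\mathscr{G},\dots)+\mathcal{N}(\dots,\mathscr{H},\dots)$ with the remaining arguments held fixed. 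I would check that this additivity rests on two ingredients already available here: that fewspaces are reproducing kernel spaces whose kernels \emph{multiply} under Aronszajn's product, so that the associated curvature form of a product adds, and that the expected root count of a system is the integral over $\mathscr{K}$ of the wedge of the $n$ forms attached to the individual spaces. Because the wedge product is multilinear, additivity of each form propagates to additivity of $\mathcal{N}$ in each argument.

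With these two properties in hand the argument is pure multinomial bookkeeping. Expanding $P=\mathscr{F}_1^{\lambda_1}\cdots\mathscr{F}_n^{\lambda_n}$ by additivity in each of the $n$ slots yields
\[
\mathcal{N}(P,\dots,P)=\sum_{j_1=1}^{n}\cdots\sum_{j_n=1}^{n}\lambda_{j_1}\cdots\lambda_{j_n}\,\mathcal{N}(\mathscr{F}_{j_1},\dots,\mathscr{F}_{j_n}),
\]
a homogeneous polynomial of degree $n$ in $\lambda_1,\dots,\lambda_n$; in particular the dependence on the $\lambda_i$, a priori defined only for $\lambda_i\in\mathbb{N}$, is genuinely polynomial, which is what licenses the phrase ``the homogeneous polynomial'' in the statement. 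The squarefree monomial $\lambda_1\lambda_2\cdots\lambda_n$ collects exactly those tuples $(j_1,\dots,j_n)$ that are permutations of $(1,\dots,n)$; by symmetry each of these $n!$ tuples contributes the same value $\mathcal{N}(\mathscr{F}_1,\dots,\mathscr{F}_n)$, so the coefficient equals $n!\,\mathcal{N}(\mathscr{F}_1,\dots,\mathscr{F}_n)$. Dividing by $n!$ identifies the coefficient of $\lambda_1\cdots\lambda_n$ in $\frac{1}{n!}\mathcal{N}(P,\dots,P)$ as $\mathcal{N}(\mathscr{F}_1,\dots,\mathscr{F}_n)=\mathbb{E}(n_{\mathscr{K}}(\mathbf f))$, which is the assertion.

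I expect the genuine obstacle to lie not in this bookkeeping but in the additivity it invokes---that is, in establishing that the expected root count of a system factors as the integral of a wedge of kernel-curvature forms and that these forms add under the product of fewspaces. Granting that, the only points demanding care are that $P$ is again a bona fide finite dimensional fewspace carrying a standard Gaussian distribution, so that $\mathcal{N}(P,\dots,P)$ is defined, and that measurability of $\mathscr{K}$ suffices for the integral formula; both should be routine once the product operation on fewspaces has been set up.
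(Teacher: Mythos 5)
Your proposal is correct and follows essentially the same route as the paper: the additivity you invoke is exactly the paper's Lemma~\ref{lemma:main} (proved by multiplying reproducing kernels via Theorem~\ref{th:aronszajn:2}, so that the K\"ahler forms add, and inserting into the density formula of Theorem~\ref{th:density}), and your multinomial expansion with the symmetry argument extracting the coefficient of $\lambda_1\cdots\lambda_n$ is precisely the paper's repeated application of that lemma. You have also correctly located the real technical content in the additivity step rather than in the bookkeeping.
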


In the setting of Bernstein's theorem, one may identify 
$\mathscr F_{\lambda_1 A_1 + \cdots + \lambda_n A_n}$ 
to $\mathscr F_{A_1}^{\lambda_1} \mathscr F_{A_2}^{\lambda_2} \cdots \mathscr F_{A_n}
^{\lambda_n}$. 
(See Section~\ref{sec:product} for details).
With this identification, Bernstein's theorem follows
immediately from Kushnirenko's theorem and Theorem~\ref{th:main}.
\medskip
\par

A basic step for the proof of Theorem~\ref{th:main} is:

\begin{lemma}\label{lemma:main}
Let $\mathscr E, \mathscr F_1, \mathscr F_2, \dots, \mathscr F_n$ be finite dimensional fewspaces
of functions of $\mathscr M$. Let $\mathscr K \subseteq M$ be measurable. 
Then,
\begin{eqnarray*}
\mathbb E_{
f_1 \in 
\mathscr E
\mathscr F_1,
f_2 \in \mathscr F_2,
\dots,
f_n \in \mathscr F_n}
(n_{\mathscr K} (\mathbf f))
&=&
\mathbb E_{
f_1 \in \mathscr E,
f_2 \in \mathscr F_2,
\dots,
f_n \in \mathscr F_n}
(n_{\mathscr K} (\mathbf f))
+\\
& &+
\mathbb E_{
f_1 \in \mathscr F_1,
f_2 \in \mathscr F_2,
\dots,
f_n \in \mathscr F_n}
(n_{\mathscr K} (\mathbf f))
.
\end{eqnarray*}
Above, all fewspaces are assumed with the standard Gaussian
probability distribution.
\end{lemma}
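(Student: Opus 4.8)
The plan is to reduce the lemma to a pointwise statement about Kac--Rice densities, where the additivity becomes the elementary fact that $\log$ turns Aronszajn's product kernel into a sum. The three expectations in the statement differ only in the law of the first equation $f_1$, which ranges over $\mathscr E\mathscr F_1$, $\mathscr E$, or $\mathscr F_1$, while $f_2,\dots,f_n$ are fixed and independent of $f_1$. It therefore suffices to produce an integral representation $\mathbb E(n_{\mathscr K}(\mathbf f))=\int_{\mathscr K}\rho(\mathbf x)\,\dd M(\mathbf x)$ in which the integrand $\rho$ depends on $f_1$ only linearly, through a single matrix built from the reproducing kernel $K_1$ of the space containing $f_1$.

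First I would set up the complex Kac--Rice / coarea formula for the Gaussian field $\mathbf f$. Fixing $\mathbf x$ and working in a local holomorphic chart, the joint law of $(\mathbf f(\mathbf x),D\mathbf f(\mathbf x))$ is a circularly symmetric complex Gaussian; since $M$ is a complex $n$-manifold, the real Jacobian of $\mathbf f$ equals $|\det_{\mathbb C} D\mathbf f(\mathbf x)|^2$, so
\[
\rho(\mathbf x)=\mathbb E\bigl[\,|\det_{\mathbb C} D\mathbf f(\mathbf x)|^2 \mid \mathbf f(\mathbf x)=0\,\bigr]\; p_{\mathbf f(\mathbf x)}(0).
\]
Independence of the $f_i$ factorises the value density as $p_{\mathbf f(\mathbf x)}(0)=\prod_i (\pi K_i(\mathbf x,\mathbf x))^{-1}$ and makes the rows $\nabla f_i(\mathbf x)$ of the Jacobian independent; conditioning each row on $f_i(\mathbf x)=0$ gives independent complex Gaussian rows with covariances $\Sigma_i(\mathbf x)$. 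The expected squared modulus of the determinant of a matrix with independent complex Gaussian rows is $n!$ times the mixed discriminant of the row covariances, so $\rho(\mathbf x)=\tfrac{n!}{\pi^n}\,D(\tilde\Sigma_1,\dots,\tilde\Sigma_n)$, where $\tilde\Sigma_i=\Sigma_i/K_i(\mathbf x,\mathbf x)$ and the scalar factors have been absorbed using the multilinearity of $D$.

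The heart of the argument is the identification of the normalised conditional covariance with a curvature form. A direct computation from the reproducing property gives, for the Gaussian field with kernel $K$,
\[
\frac{\Sigma_{ij}(\mathbf x)}{K(\mathbf x,\mathbf x)} = \partial_{x_i}\partial_{\bar y_j}\log K(x,y)\big|_{x=y=\mathbf x}.
\]
By the theory of Section~\ref{sec:product}, the reproducing kernel of the product fewspace satisfies $K_{\mathscr E\mathscr F_1}=K_{\mathscr E}\,K_{\mathscr F_1}$, whence $\log K_{\mathscr E\mathscr F_1}=\log K_{\mathscr E}+\log K_{\mathscr F_1}$ and therefore $\tilde\Sigma_1^{\,\mathscr E\mathscr F_1}=\tilde\Sigma_1^{\,\mathscr E}+\tilde\Sigma_1^{\,\mathscr F_1}$ at every $\mathbf x$. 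Because the mixed discriminant is linear in its first argument, the densities add, $\rho^{\mathscr E\mathscr F_1}=\rho^{\mathscr E}+\rho^{\mathscr F_1}$, and integrating over $\mathscr K$ yields the lemma.

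The step I expect to be most delicate is the rigorous justification of the integral representation, rather than the algebra that follows it. One must verify the Kac--Rice hypotheses for these analytic Gaussian fields (almost sure simplicity of the zeros, local integrability of $\rho$, measurability for an arbitrary measurable $\mathscr K$) and, above all, handle the loci where the distribution degenerates: points $\mathbf x$ with $K_i(\mathbf x,\mathbf x)=0$, or where $f_i$ and its gradient fail to be jointly nondegenerate, precisely the points at which the normalisation above breaks down. I would treat these exceptional sets by means of the lower semicontinuity of $n_{\mathscr K}$ already noted in the introduction, together with a limiting/regularisation argument, reducing to the nondegenerate case where the computation applies verbatim.
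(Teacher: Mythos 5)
Your proposal is correct and follows essentially the same route as the paper: Aronszajn's product-kernel theorem gives $\log K_{\mathscr E\mathscr F_1}=\log K_{\mathscr E}+\log K_{\mathscr F_1}$, hence additivity of the curvature/covariance data in the first slot, which is then inserted into a root-density formula that is linear in that slot. The only difference is packaging: the paper simply cites its Theorem~\ref{th:density} (the density $\frac{1}{\pi^n}\,\omega_1\wedge\cdots\wedge\omega_n$, already established there via the coarea formula and Proposition~\ref{prop:pull-back}), whereas you re-derive that density in Kac--Rice/mixed-discriminant form --- an equivalent expression, since the wedge product of the K\"ahler forms is exactly the mixed discriminant of your normalised conditional covariances --- and your concern about the locus $K_i(\mathbf x,\mathbf x)=0$ is vacuous because Definition~\ref{fewspace}(2) excludes base points.
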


\begin{remark}
This generalizes a result by Kaveh and Khovanskii~\cite{KKh2010}*{Th.4.23}, 
where $\mathscr K$ is
assumed to be an irreducible $n$-dimensional complex variety. There is however a
subtle difference. As they
consider certain semi-group of linear spaces 
of {\em regular functions} on $\mathscr K$, the choice of inner products is
immaterial. Here, this choice matters. This is why
we insist in speaking about a product of {\em inner product} linear spaces, where
the  inner product at $\mathscr E \mathscr F_1$ is determined by the inner products
at factors $\mathscr E$ and $\mathscr F_1$. This inner product appeared in
Aronszajn's paper \cite{Aronszajn}, in connection with reproducing kernel spaces
(Section~\ref{sec:product}).
\end{remark}

\section{Related work}

Random polynomial systems constitute a classical subject of studies, 
and received
a lot of attention lately (See for instance the book by Azaïs and
Wschebor~\cite{Azais-Wschebor} and references). Part of the interest
comes from the study of algorithms for solving polynomial systems
such as in~\cites{Bezout1,Bezout2,Bezout3,Bezout4,Bezout5}.
The running time of algorithms can be estimated in terms of certain
invariants, such as the number of real or complex zeros,
and the condition number. While the number of real zeros of real
polynomial systems and the condition number depend on the input system,
it is possible to obtain probabilistic complexity estimates by
endowing the space of polynomials with a probability distribution, and
then treating those quantities as random variables. For the full picture,
see the book~\cite{BCSS} and two forthcoming books~\cites{NONLINEAR-EQUATIONS,
CONDITIONNING}.
Recent papers on the subject include
~\cites{ Dedieu-Malajovich2008, Armentano-Dedieu2009, CKMW2, CKMW3}.
The extension of this theory to systems of sparse polynomial systems
started with~\cites{Malajovich-Rojas2002,Malajovich-Rojas2004} (see below)
and is still a research subject (see~\cite{NONLINEAR-EQUATIONS}).
\medskip
\par
The subject of random (Gaussian) analytic functions in one complex
variable is also quite active (see for instance the book \cite{HKPV} or
the review \cite{Nazarov-Sodin}). Of particular interest are families of
random functions with density of zeroes invariant by a convenient group of
transformations~\cite{Sodin, Sodin-Tsirelson}.
There are connections with determinantal (or {\em fermionic}) point processes
~\cite{Peres-Virag, HKPV} and zeros ({\em nodal points}) of random spherical harmonics~\cite{Nazarov-Sodin}.

\medskip
\par
Another source of interest comes from classical
asymptotic estimates such as
in  
Littlewood-Offord~\cites{Littlewood-Offord1943,Littlewood-Offord1945}
and Kac~\cites{Kac1943,Kac1949}. 
\par

Asymptotic formulas for the number of roots of sparse polynomial systems
can be obtained by scaling the supports. For instance, one looks
at systems of Laurent polynomials such as
\[
f_i(\mathbf x) = \sum_{\mathbf a \in A_i} f_{i\mathbf a} \mathbf x^{t \mathbf a}
\]
where $t$ is a scaling parameter. A random variable of
interest in the zero-dimensional case 
is $t^{-n} n_{M}(\mathbf f)$.  
In~\cite{Shiffman-Zelditch}, Shiffman and Zelditch gave asymptotic
formulas for the root density in terms of the mixed volume form.

Kazarnovskii~\cite{Kazarnovskii2004} obtained more general formulas.
He considered 
fewnomials that are (after multiplying variables by $\sqrt{-1}$) 
Fourier transforms of distributions supported by real compact sets.
For instance, \eqref{example1} is the Fourier transform of a distribution
with support $\{0,1\}$, namely
\[
\sum_{\substack{i=0,1\\ j=0,\dots,d}} f_{ij} \frac{(-1)^j}{j!} \delta_i^{(j)}(y) 
.
\]

The convex bodies that appear in the Kushnirenko and Bernstein theorems
are replaced by the convex hull of the support of the distributions.
In this sense, he generalized Bernstein's theorem to non-polynomials
and non-exponential-sums. However, his bounds for (say) \eqref{example1}
do not take into account different values of $d$. That is why those bounds
must be asymptotic.

More recently, Kaveh and Khovanskii\cite{KKh2010} developed an intersection
theory for spaces of rational functions over irreducible projective varieties.
The {\em complex fewnomial spaces} introduced in this paper are an attempt to generalize
some of this theory to more general spaces of holomorphic functions.

A generalization of the Newton polytope introduced by Okounkov~\cite{Okounkov}
(the {\em Newton-Okounkov body}) plays an important role in the intersection
theory for rational functions~\cite{KKh2012}.

\section{Spaces of complex fewnomials}
\label{sec:fewspaces}
Let $M$ be an $n$-dimensional complex manifold. In this section we
review part of the theory of spaces of complex fewnomials
in $M$. 
Some further details can be found  
in~\cite{NONLINEAR-EQUATIONS}. Canonical references for
analytic functions of several variables and for reproducing
kernel spaces are, respectively, \cite{Krantz} and
~\cite{Aronszajn}.

\begin{definition} \label{fewspace}
A {\em complex fewnomial space} $\mathscr F$ (or {\em fewspace} for short)
of functions over a complex manifold $M$ 
is a Hilbert space 
of holomorphic functions from $M$ to $\mathbb C$
such that the following holds.
Let $V: M\rightarrow \mathscr F^*$
denote the {\em evaluation form} $V(\mathbf x): f \mapsto f(\mathbf x)$.
For any $\mathbf x \in M$, 
\begin{enumerate}
\item \label{rep-space} $V(\mathbf x)$ is a continuous linear form.
\item \label{nevervanish} $V(\mathbf x)$ is not the zero form.
\end{enumerate}
In addition, we say that the fewspace is {\em non-degenerate} if 
and only if, for any $\mathbf x \in M$, 
\begin{itemize}
\item[(3)] $P_{V(\mathbf x)} DV(\mathbf x)$ has full rank, 
\end{itemize}
where
$P_W$ denotes the orthogonal projection onto $W^{\perp}$. (The derivative
is with respect to $\mathbf x$).
In particular, a non-degenerate fewspace has complex dimension $\ge n+1$.
\end{definition}

\begin{remark} Eventual points $x \in M$ such that $V(x)=0$ are known as
{\em base locus} of $\mathscr F$. In that language, condition (2) says that
$\mathscr F$ has no base locus.
\end{remark}

\begin{example} Let $M$ be an open connected subset of $\mathbb C^n$.
{\em Bergman space} $\mathscr A(M)$ is the space of holomorphic functions
defined in $M$ with finite $\mathscr L^2$ norm. The inner product is
the $\mathscr L^2$ inner product. When $M$ is bounded, $\mathscr A(M)$
contains constant and linear functions, hence it is a 
non-degenerate fewspace. For more details, see~\cite{Krantz}.
\end{example}

\begin{remark}
Condition 1 holds trivially for any finite dimensional fewnomial space,
and less trivially for subspaces of Bergman space. 
\end{remark}

\begin{example} If $M$ is a quasi-projective variety, then $\mathscr O(M)$ denotes
the space of regular (holomorphic) functions $M \rightarrow \mathbb C$. 
Let $\mathscr L$ be a finite dimensional subspace of $\mathscr O(M)$ 
without base locus. Then $\mathscr L$ is a fewspace. The semi-group
$K_{\mathrm {reg}}$ of all such spaces plays an important role in the
intersection theory of Kaveh and Khovanskii~\cite{KKh2010}. 
\end{example}

To each fewspace $\mathscr F$  we associate two objects: The  
{\em reproducing kernel} $K(\mathbf x, \mathbf y)
=K_{\mathscr F}(\mathbf x,\mathbf y)$ (also known as the 
{\em covariance kernel}) and a possibly degenerate Kähler form
$\omega = \omega_{\mathscr F}$ on $M$.

Item~(\ref{rep-space}) in the definition makes $V(\mathbf x)$ an
element of the dual space $\mathscr F^*$ of $\mathscr F$ (more precisely,
the space of continuous functionals $\mathscr F \rightarrow \mathbb C$).

Riesz-Fréchet representation Theorem 
(e.g. \cite{Brezis} Th.V.5 p.81)
allows to identify $\mathscr F$ and $\mathscr F^*$.
Let $V(\mathbf x)^* \in \mathscr F$ be the dual element 
of $V(\mathbf x) \in \mathscr F^*$. Then we define the 
Kernel 
$K(\mathbf x,\mathbf y) = \overline{(V(\mathbf x)^*)(\mathbf y)}$. 
For fixed $\mathbf x$, $\mathbf z \mapsto K(\mathbf x, \bar{\mathbf z}) \in
\mathscr F$.
\medskip
\par

Let $\langle \cdot , \cdot \rangle$ denote the inner product in $\mathscr F$. By convention,
it is linear in the first variable and antilinear in the second variable.
By construction, for $f \in \mathscr F$,
\[
f(\mathbf y) = 
\langle f(\cdot), K(\cdot, \mathbf y) \rangle
.
\]

There are two consequences. First of all,
\[
K(\mathbf y,\mathbf x) = 
\langle K(\cdot, \mathbf x), K(\cdot, \mathbf y) \rangle
=
\overline{\langle K(\cdot, \mathbf y), K(\cdot, \mathbf x) \rangle}
= \overline{K(\mathbf x,\mathbf y)}
\]
and in particular, for any fixed $\mathbf y$, $\mathbf x \mapsto K(\mathbf x,\mathbf y)$ is also an
element of $\mathscr F$. Thus, $K(\mathbf x,\mathbf y)$ is analytic in $\mathbf x$ and in $\bar {\mathbf y}$.
Moreover, $\| K(\mathbf x, \cdot) \|^2 = K(\mathbf x,\mathbf x)$.

Secondly, $Df(\mathbf y) \dot {\mathbf y} = 
\langle f(\cdot), D_{\bar {\mathbf y}} K(\cdot, {\mathbf y}) \bar{\dot {\mathbf y}} \rangle$ and the same holds for higher derivatives.

Because of Definition~\ref{fewspace}(\ref{nevervanish}),
$K(\cdot, y) \ne 0$. 
Thus, $y \mapsto K(\cdot, y)$ induces a map from $M$ to $\mathbb P(\mathscr F)$.

\begin{remark} In \cite{KKh2010}, 
the corresponding map $M \rightarrow \mathbb P(\mathscr F^*)$ is also known
as the {\em Kodaira map}.
\end{remark}

Let $\partial$ and $\bar \partial$ denote the holomorphic (resp. anti-holomorphic)
exterior derivative operators \cite{Krantz}. 
Recall that the Fubini-Study form 
\[
\omega_f 
=
\frac{\sqrt{-1}}{2} \partial \bar \partial \log \|f\|^2
\]
is defined in $\mathscr F \setminus \{0\}$ and induces
a non-degenerate symplectic 1-1 form on $\mathbb P(\mathscr F)$.
The differential form $\omega_{\mathscr F}$ is defined on $M$ as the pull-back
of the Fubini-Study form by $y \mapsto K(\cdot, y)$. As $\partial$ and $\bar \partial$
commute with the pull-back operator,
\begin{equation}\label{fubini}
(\omega_{\mathscr F})_{\mathbf x} = 
\omega_{\mathbf x} = \frac{\sqrt{-1}}{2} \partial \bar \partial  \log K(\mathbf x,\mathbf x) .
\end{equation}

When the form $\omega$ is non-degenerate for all $x \in M$,
it induces a Hermitian structure on $M$. This happens if and
only if the fewspace is a non-degenerate fewspace.

\begin{remark} If $\phi_i(x)$ denotes an 
orthonormal basis of $\mathscr F$ (finite or infinite), then
the kernel can be written as
\[
K(\mathbf x,\mathbf y) = \sum \phi_i(\mathbf x) \overline{\phi_i(\mathbf y)} .
\] 
\end{remark}

\begin{remark} If $\mathscr F=\mathscr A(M)$ is the Bergman space, the kernel
obtained above is known as the {\em Bergman Kernel} and the
metric induced by $\omega$ as the {\em Bergman metric}. 
\end{remark}

\begin{remark}
It is possible to consider Gaussian analytic functions (GAF) as Gaussian functions in
$\mathscr A(\mathscr D)$. However, it is necessary to pay some attention to
the covariance.
Most recent results refer to Gaussian functions with
a particular (diagonal) covariance bilinear form, with nice invariance properties.
For instance, the GAF in \cite{Peres-Virag} is
\begin{equation}\label{GAF}
f(z)=\sum_{n \ge 0} a_n z^n,
\hspace{3em} a_n \in N(0,1; \mathbb C).
\end{equation}
In orthonormal coordinates $\phi_n(z) = \sqrt{\frac{n+1}{\pi}} z^n$, 
$f(z) = \sum_{n \ge 0} \tilde a_n \phi_n(z)$ with variance 
$E(|\tilde a_n|^2) = \frac{\pi}{n+1}$.
The Gaussian Entire Function~\cite{Nazarov-Sodin} is
\begin{equation}\label{GEF}
g(z)=
\sum_{n \ge 0} b_n \frac{z^n}{\sqrt{n!}} = \sum_{n \ge 0} \tilde b_n \phi_n(z)
,
\hspace{3em} b_n \in N(0,1; \mathbb C)
\end{equation}
so the variance of $\tilde b_n = \frac{b_n\sqrt{\pi}}{\sqrt{n!(n+1)}}$
is $E(|\tilde b_n|^2) = \frac{\pi}{n!(n+1)}$.
\end{remark}

Let $n_{\mathscr K}(f)$ be the
number of isolated zeros of $f$ that belong to a measurable
set $\mathscr K$. The following result
is well-known. It appears in~\cite{Kazarnovskii1984}*{Prop.3} 
and~\cite{GROMOV}*{Prop-Def.1.6A}.
It is a consequence of {\em Crofton's formula}, and can also be
deduced from the 
{\em Rice formula} \cite{Azais-Wschebor} or from the {\em coarea formula} \cite{BCSS}.

\begin{theorem}[Root density]\label{th:density}
Let $\mathscr K$ be a measurable set of an $n$-dimensional complex 
manifold $M$. 
Let $\mathscr F_1, \dots, \mathscr F_n$ be 
fewspaces over $M$.
Let $\omega_1, \dots, \omega_n$ be the induced Kähler forms on $M$.
Assume that $\mathbf f=f_1, \dots, f_n$ 
is a standard Gaussian random variable
 in $\mathscr F = \mathscr F_1 \times \cdots \times \mathscr F_n$.
Then, 
\[
\mathbb E(n_{\mathscr K}(\mathbf f))  
=
\frac{1}{\pi^n}
\int_{\mathscr K} 
\omega_1 \wedge \cdots \wedge \omega_n
.
\]
\end{theorem}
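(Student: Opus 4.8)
The plan is to apply the Kac--Rice formula (equivalently the coarea or Crofton formula) to reduce the statement to a pointwise computation, and then to identify the resulting density with $\pi^{-n}\omega_1\wedge\cdots\wedge\omega_n$ by a direct calculation with the reproducing kernels $K_i$. Because each $f_i$ is holomorphic, the map $\mathbf f=(f_1,\dots,f_n)\colon M\to\mathbb C^n$, viewed as a map between $2n$-dimensional real manifolds, has real Jacobian equal to $|\det_{\mathbb C}D\mathbf f(\mathbf x)|^2\ge0$. Thus there is no absolute-value subtlety and every isolated zero is counted with positive multiplicity. Granting Kac--Rice, I would write
\[
\mathbb E(n_{\mathscr K}(\mathbf f))=\int_{\mathscr K}\rho(\mathbf x)\,dV(\mathbf x),
\qquad
\rho(\mathbf x)=\mathbb E\!\left(|\det_{\mathbb C}D\mathbf f(\mathbf x)|^2\ \middle|\ \mathbf f(\mathbf x)=0\right)\cdot p_{\mathbf f(\mathbf x)}(0),
\]
where $dV$ is the Euclidean volume form in local holomorphic coordinates $x_1,\dots,x_n$ and $p_{\mathbf f(\mathbf x)}$ is the density of $\mathbf f(\mathbf x)\in\mathbb C^n$. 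It remains to evaluate $\rho(\mathbf x)$.

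Fix $\mathbf x$ and an orthonormal basis $\{\phi_{i,m}\}_m$ of each $\mathscr F_i$, so $f_i=\sum_m c_{i,m}\phi_{i,m}$ with the $c_{i,m}$ i.i.d.\ standard complex Gaussians. Then $f_i(\mathbf x)$ is centred complex Gaussian of variance $\sum_m|\phi_{i,m}(\mathbf x)|^2=K_i(\mathbf x,\mathbf x)$, the $f_i(\mathbf x)$ are independent, and $K_i(\mathbf x,\mathbf x)>0$ by Definition~\ref{fewspace}(\ref{nevervanish}); hence $p_{\mathbf f(\mathbf x)}(0)=\prod_i(\pi K_i(\mathbf x,\mathbf x))^{-1}$. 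Writing $\Phi_i=(\phi_{i,m}(\mathbf x))_m$ and $\partial_j\Phi_i=(\partial_j\phi_{i,m}(\mathbf x))_m$, the value and derivatives of $f_i$ at $\mathbf x$ are jointly Gaussian with covariances read off the kernel, so the conditional law of the derivative covector given $f_i(\mathbf x)=0$ is centred Gaussian with covariance
\[
(\Sigma_i)_{jk}
=\langle\partial_j\Phi_i,\partial_k\Phi_i\rangle-\frac{\langle\partial_j\Phi_i,\Phi_i\rangle\,\langle\Phi_i,\partial_k\Phi_i\rangle}{K_i(\mathbf x,\mathbf x)}
=\langle P_i\,\partial_j\Phi_i,\,P_i\,\partial_k\Phi_i\rangle,
\]
where $P_i$ is the orthogonal projection onto $\Phi_i^{\perp}$; moreover the rows for distinct $i$ remain independent.

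The key identity, which I would establish next, is that $\Sigma_i$ is exactly the Hermitian form of $\omega_i$ up to the scalar $K_i$. Differentiating $\log K_i(\mathbf x,\mathbf x)$ and using $\partial_j\bar\partial_k K_i=\langle\partial_j\Phi_i,\partial_k\Phi_i\rangle$ and $\partial_jK_i=\langle\partial_j\Phi_i,\Phi_i\rangle$ gives
\[
(H_i)_{jk}:=\partial_j\bar\partial_k\log K_i(\mathbf x,\mathbf x)=\frac{(\Sigma_i)_{jk}}{K_i(\mathbf x,\mathbf x)},
\qquad\text{so}\qquad
\omega_i=\frac{\sqrt{-1}}{2}\sum_{j,k}(H_i)_{jk}\,dx_j\wedge d\bar x_k
\]
by \eqref{fubini}. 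For the conditional Jacobian I would use the second-moment formula for determinants of Gaussian matrices: if the rows of $A$ are independent centred complex Gaussians with covariances $\Sigma_1,\dots,\Sigma_n$, then $\mathbb E|\det_{\mathbb C}A|^2$ is symmetric and multilinear in the $\Sigma_i$ and equals $n!\det\Sigma$ when all $\Sigma_i=\Sigma$; by polarisation it is $n!\,D(\Sigma_1,\dots,\Sigma_n)$, with $D$ the mixed discriminant normalised by $D(\Sigma,\dots,\Sigma)=\det\Sigma$. Multilinearity then gives $D(\Sigma_1,\dots,\Sigma_n)=\big(\prod_iK_i(\mathbf x,\mathbf x)\big)D(H_1,\dots,H_n)$, the factors $K_i(\mathbf x,\mathbf x)$ cancel against those in $p_{\mathbf f(\mathbf x)}(0)$, and, using the polarisation of the identity $\omega^n=n!\det(H)\,dV$,
\[
\rho(\mathbf x)\,dV=\frac{n!}{\pi^n}\,D(H_1,\dots,H_n)\,dV=\frac{1}{\pi^n}\,\omega_1\wedge\cdots\wedge\omega_n.
\]
Integrating over $\mathscr K$ proves the theorem.

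The linear algebra above---conditioning a joint Gaussian, the second-moment formula for $|\det|^2$, and the two polarisation identities---is where the bookkeeping lies but presents no real difficulty. The genuine obstacle is justifying the Kac--Rice formula at this level of generality: one must verify that for almost every $\mathbf f$ the zeros in $\mathscr K$ are isolated with nonzero Jacobian, that $\mathbf f\mapsto n_{\mathscr K}(\mathbf f)$ is measurable, and that expectation and the integral over $M$ may be exchanged when $\mathscr K$ is merely measurable and the $\mathscr F_i$ are possibly degenerate (so that $H_i$, and hence $\rho$, may drop rank). I would control non-degeneracy of the combined system through Definition~\ref{fewspace}, reduce measurability and integrability to local finiteness of the kernels, and invoke the precise statements of the Kac--Rice, coarea, or Crofton formulas in \cite{Azais-Wschebor}, \cite{BCSS}, \cite{Kazarnovskii1984}, or \cite{GROMOV}.
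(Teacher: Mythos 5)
Your proof is correct, and it computes the same quantities the paper does, but it organizes the argument differently at both ends. On the front end, the paper does not invoke Kac--Rice as a black box: it sets up the incidence variety $\mathscr V=\{(\mathbf f,\mathbf x):\mathbf f(\mathbf x)=0\}$ with its two projections and applies the coarea formula to the first projection, which replaces your explicit conditioning $\mathbb E(\cdot\mid\mathbf f(\mathbf x)=0)\,p_{\mathbf f(\mathbf x)}(0)$ by an integral over the fiber $\mathscr F_{\mathbf x}=K_1(\cdot,\mathbf x)^{\perp}\times\cdots\times K_n(\cdot,\mathbf x)^{\perp}$ weighted by the normal Jacobian $NJ(\mathbf f,\mathbf x)=|\det D\mathbf f(\mathbf x)|^{-1}\prod_iK_i(\mathbf x,\mathbf x)^{1/2}$; these are the same thing (restricting a standard Gaussian to the hyperplane $K_i(\cdot,\mathbf x)^{\perp}$ \emph{is} conditioning on $f_i(\mathbf x)=0$), and the paper itself notes that Rice, coarea, and Crofton are interchangeable here. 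On the back end, where you expand $\mathbb E|\det A|^2$ as $n!$ times a mixed discriminant of the conditional covariances $\Sigma_i$ and then polarize $\omega^{\wedge n}=n!\det(H)\,dV$ to reassemble $\omega_1\wedge\cdots\wedge\omega_n$, the paper lets the exterior algebra do that bookkeeping automatically: it writes $|\det D\mathbf f(\mathbf x)|^2\,dV=\bigwedge_i\sum_{j,k}\partial_jf_i\,\overline{\partial_kf_i}\,\tfrac{\sqrt{-1}}{2}\,dx_j\wedge d\bar x_k$, uses independence of the $f_i$ to push the Gaussian integral inside each wedge factor, and then identifies each factor $\Omega_i$ with $\omega_i$ via Proposition~\ref{prop:pull-back} --- which is exactly your identity $\Sigma_i=K_i(\mathbf x,\mathbf x)\,H_i$ in the form $\langle\mathbf u,\mathbf w\rangle_{i,\mathbf x}=\mathbb E\bigl((Df_i(\mathbf x)\mathbf u)\overline{Df_i(\mathbf x)\mathbf w}\bigr)/K_i(\mathbf x,\mathbf x)$ over the fiber. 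Your route makes the Gaussian linear algebra (conditional covariance, second moment of $|\det|^2$, polarization) fully explicit, at the cost of introducing the mixed discriminant; the paper's route avoids naming it but requires the reader to see that the wedge of the $n$ separately-averaged $2$-forms equals the average of the wedge, which is where independence is silently used. Both proofs defer the measure-theoretic justification (measurability of $n_{\mathscr K}$, a.s.\ nondegeneracy, Fubini) to the references, so you are not below the paper's own standard of rigor there.
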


As the formulation in terms of reproducing kernel spaces is not
standard, we sketch the proof below (more details are available
in~\cite{NONLINEAR-EQUATIONS}*{Th.5.11}).

\begin{proof}
First of all, let $\mathscr V = \{ (\mathbf f, \mathbf x) \in
\mathscr F \times \mathscr K: \mathbf f(\mathbf x)=0 \}$ be
the incidence locus, and $\pi_1: \mathscr V \rightarrow \mathscr F$,
$\pi_2: \mathscr V \rightarrow \mathscr K$ be the canonical 
projections.

In a neighborhood of each regular point $(\mathbf f_0, \mathbf x_0)$ 
of $\pi_1$, it is possible to parametrize $\mathscr V$ by an
implicit function $(\mathbf f, G(\mathbf f))$ with $G(\mathbf f_0)=\mathbf x_0$
and
\[
DG (\mathbf x_0) = - D\mathbf f(\mathbf x_0)^{-1} 
\left( K_1(\cdot, \mathbf x_0)^* 
\oplus
\cdots
\oplus
K_n(\cdot , \mathbf x_0)^* 
\right)
\]
where $K_i$ is the reproducing kernel of $\mathscr F_i$.
\par

Recall that the {\em normal Jacobian} $NJ_G(\mathbf f)$ of a submersion 
$G$ at $\mathbf f$ is
the product of the singular values of $DG(\mathbf f)$. 
It is more convenient 
to write $NJ_G(\mathbf f) = \det( DG(\mathbf f) DG(\mathbf f)^* )^{1/2}$. Also,
$NJ(\mathbf f_0, \mathbf x_0)$ denotes the normal Jacobian $NJ_G(\mathbf f_0)$ where
$G$ is the implicit function defined above.

Let $\mathscr F_{\mathbf x}$ denote the product
$K_1(\cdot, \mathbf x)^{\perp} \times \cdots \times
K_n(\cdot, \mathbf x)^{\perp} \subseteq \mathscr F$.
The coarea formula \cite[Th.5 p.243]{BCSS} is now
\begin{eqnarray*}
\mathbb E(n_{\mathscr K}(\mathbf f))  
&=&
\frac{1}{\pi^{\dim (\mathscr F)}} \int_{\mathscr F} 
\# \{(\pi_2 \circ \pi_1^{-1}) (\mathbf f) \}
e^{-\|\mathbf f\|^2} \ \dd V_{\mathscr F}(\mathbf f)
\\
&=&
\frac{1}{\pi^{\dim (\mathscr F)} } 
\int_{\mathscr K} dV_M(\mathbf x)
\int_{\mathscr F_{\mathbf x}}
NJ(\mathbf f,\mathbf x)^{-2}
e^{-\|\mathbf f\|^2} \ \dd V_{\mathscr F_{\mathbf x}}(\mathbf f)
\end{eqnarray*}
with $NJ(\mathbf f, \mathbf x)  
= |\det D\mathbf f(\mathbf x)|^{-1}\prod(K_i(\mathbf x, \mathbf x))^{1/2}$.
\medskip
\par
Let $P$ denote the $2n \times 2n$ {\em shuffling} matrix, that is
\[
P_{ij} = 
\left\{
\begin{array}{ll}
1 & \text{if $i-1 \equiv 2 (j-1) \mod 2n$} \\
0 & \text{otherwise.}
\end{array}
\right.
\]

The Leibniz formula for the determinant yields:
\begin{eqnarray*}
\left| \det D\mathbf f(\mathbf x)\right|^{2} \dd V
&=& \det \left(P \left[
\begin{matrix}
D\mathbf f(\mathbf x) & 0 \\
0 & \overline{D\mathbf f(\mathbf x)} 
\end{matrix}\right] P^T\right)
 \dd V\\
&=&
\bigwedge_{i=1}^{n}
\sum_{j,k=1}^n
\frac{\partial}{\partial x_j} f_i(\mathbf x)
\overline{\frac{\partial}{\partial x_k} f_i(\mathbf x)} 
\
\frac{\sqrt{-1}}{2} 
\dd x_j \wedge d\bar x_k .
\end{eqnarray*}
At this point,
\[
\mathbb E(n_{\mathscr K}(\mathbf f))  
=
\frac{1}{\pi^n} 
\int_{\mathscr K} dV_M(\mathbf x)
\bigwedge_{i=1}^n \Omega_i
\]
with 
\[
\Omega_i = 
\int_{K_i(\cdot, \mathbf x)^\perp}
\frac{ 
\frac{\partial}{\partial x_j} f_i(\mathbf x)
\overline{\frac{\partial}{\partial x_k} f_i(\mathbf x)} 
\
\frac{\sqrt{-1}}{2} 
\dd x_j \wedge d\bar x_k
}
{K_i(\mathbf x, \mathbf x)}
\frac{
e^{-\|f_i\|^2} 
}{\pi^{\dim (\mathscr F_i) - 1}}
\ \dd V_{K_i(\cdot, \mathbf x})^{\perp}(f_i)
.
\]

Proposition~\ref{prop:pull-back} below implies that $\Omega_i = 2 \omega_i$,
concluding the proof of the density theorem.
\end{proof}

Let $J: T_{\mathbf x}M \rightarrow T_{\mathbf x}M$ be the complex structure of
$M$. In coordinates, it corresponds to the multiplication by $\sqrt{-1}$.
\begin{proposition}\label{prop:pull-back} 
Let $\langle \mathbf u, \mathbf w \rangle_{i,\mathbf x} = \omega_{i,\mathbf x}(\mathbf u,J\mathbf w)$ be the (possibly degenerate)
Hermitian product
associated to $\omega_i$. Then,
\[
\langle \mathbf u, \mathbf w \rangle_{i,\mathbf x} = 
\int_{K_i(\cdot, \mathbf x)^{\perp}}
\frac{ (Df_i(\mathbf x)\mathbf u) \overline{Df_i(\mathbf x)\mathbf w}}
{K_i(\mathbf x,\mathbf x)}
\frac{
e^{-\|f_i\|^2}
}{\pi^{\dim (\mathscr F_i) - 1}}
\ \dd V_{K_i(\cdot, \mathbf x})^{\perp}(f_i)
.
\]
\end{proposition}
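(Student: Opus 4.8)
The plan is to fix a point $\mathbf x\in M$ and to observe that both sides of the claimed identity are Hermitian forms in $(\mathbf u,\mathbf w)$, so that it suffices to evaluate each of them in a single well-chosen coordinate system. First I would choose an orthonormal basis $\phi_0,\phi_1,\dots,\phi_N$ of $\mathscr F_i$, where $N+1=\dim(\mathscr F_i)$, adapted to $\mathbf x$: take $\phi_0 = K_i(\cdot,\mathbf x)/\sqrt{K_i(\mathbf x,\mathbf x)}$ spanning the line $\mathbb C\,K_i(\cdot,\mathbf x)$, and $\phi_1,\dots,\phi_N$ spanning the hyperplane $K_i(\cdot,\mathbf x)^{\perp}$. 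By the reproducing property, $\phi_0(\mathbf x)=\sqrt{K_i(\mathbf x,\mathbf x)}$ while $\phi_l(\mathbf x)=0$ for $l\ge 1$; this vanishing is exactly why the domain of integration on the right-hand side is $K_i(\cdot,\mathbf x)^{\perp}$.

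For the right-hand side I would expand a generic $f_i\in K_i(\cdot,\mathbf x)^{\perp}$ as $f_i=\sum_{l\ge1}c_l\phi_l$, so that the weight $\pi^{-(\dim(\mathscr F_i)-1)}e^{-\|f_i\|^2}\dd V$ turns the $c_l$ into i.i.d. standard complex Gaussians. Since $Df_i(\mathbf x)\mathbf u=\sum_{l\ge1}c_l\,D\phi_l(\mathbf x)\mathbf u$ and $\mathbb E(c_l\bar c_m)=\delta_{lm}$, the integral collapses to
\[
\frac{1}{K_i(\mathbf x,\mathbf x)}\sum_{l=1}^{N}\bigl(D\phi_l(\mathbf x)\mathbf u\bigr)\overline{\bigl(D\phi_l(\mathbf x)\mathbf w\bigr)}.
\]
For the left-hand side I would use the potential form \eqref{fubini}, writing $K_i(\mathbf x,\mathbf x)=\sum_l\phi_l(\mathbf x)\overline{\phi_l(\mathbf x)}$ and differentiating $\log K_i$ by the quotient rule. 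The Hermitian matrix $\partial_{x_j}\partial_{\bar x_k}\log K_i$ splits into a term $K_i^{-1}\sum_l \partial_{x_j}\phi_l\,\overline{\partial_{x_k}\phi_l}$ coming from $\partial\bar\partial K_i$ and a term $-K_i^{-2}\,\partial_{x_j}K_i\,\partial_{\bar x_k}K_i$.

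The crux, and the step I expect to be the main obstacle, is the cancellation of the $l=0$ contribution. Evaluated at $\mathbf x$, the vanishing $\phi_l(\mathbf x)=0$ for $l\ge1$ forces $\partial_{x_j}K_i(\mathbf x,\mathbf x)=\partial_{x_j}\phi_0(\mathbf x)\,\overline{\phi_0(\mathbf x)}$, and since $|\phi_0(\mathbf x)|^2=K_i(\mathbf x,\mathbf x)$ the second term above exactly kills the $l=0$ summand of the first, leaving $K_i^{-1}\sum_{l\ge1}\partial_{x_j}\phi_l\,\overline{\partial_{x_k}\phi_l}$; contracting with $\mathbf u$ and $\bar{\mathbf w}$ reproduces the right-hand side. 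What requires genuine care is the bookkeeping of conventions: one must check that $\omega_{i,\mathbf x}(\mathbf u,J\mathbf w)$ under the normalisation of \eqref{fubini} produces precisely the Hermitian contraction $\sum_{j,k}h_{j\bar k}u_j\bar w_k$, and not merely its real part or twice it, and one must track the conjugation introduced by the anti-holomorphicity of $\mathbf y\mapsto K_i(\cdot,\mathbf y)$. Equivalently, the Riesz representative $\kappa_{\mathbf u}$ of $f\mapsto Df(\mathbf x)\mathbf u$ satisfies $\kappa_{\mathbf u}=\sum_l\overline{D\phi_l(\mathbf x)\mathbf u}\,\phi_l$, so that $\langle \mathbf u,\mathbf w\rangle_{i,\mathbf x}=K_i(\mathbf x,\mathbf x)^{-1}\langle \Pi\kappa_{\mathbf u},\Pi\kappa_{\mathbf w}\rangle$ with $\Pi$ the orthogonal projection onto $K_i(\cdot,\mathbf x)^{\perp}$; this invariant identity is the coordinate-free statement of the same cancellation, and once the factors and conjugates are pinned down the two sides agree.
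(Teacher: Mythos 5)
Your proof is correct and is essentially the paper's own argument written out in an adapted orthonormal basis: the cancellation of the $l=0$ term is the coordinate form of the paper's projection $P_{\mathbf x}$ onto $K_i(\cdot,\mathbf x)^{\perp}$, and the identity $\mathbb E(c_l\bar c_m)=\delta_{lm}$ is the coordinate form of the paper's Gaussian second-moment step $\int\langle f,\mathbf U\rangle\overline{\langle f,\mathbf W\rangle}\,e^{-\|f\|^2}\pi^{1-\dim(\mathscr F_i)}\,\dd V=\langle\mathbf U,\mathbf W\rangle$. The ``invariant identity'' in your last paragraph is precisely the paper's displayed formula for the pulled-back Fubini--Study product, and the convention issue you flag (whether $\omega(\mathbf u,J\mathbf w)$ gives the full Hermitian contraction or only its real part) is present in the paper's own statement and is glossed over there as well.
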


\begin{proof}[Proof of Proposition~\ref{prop:pull-back}]
Let   
\[
P_{\mathbf x} =  I - \frac{ K_i(\cdot, \mathbf x)K_i(\cdot, \mathbf x)^* }{K_i(\mathbf x,\mathbf x)}
\]
be the orthogonal projection onto $K_i(\cdot, \mathbf x)^{\perp}$.
Since the inner product $\langle \cdot , \cdot \rangle_i$ is the
pull-back of Fubini-Study by $\mathbf x \mapsto K_i(\mathbf x, \cdot)$,
we can write the
left-hand-side as:
\begin{eqnarray*}
\langle \mathbf u, \mathbf w \rangle_{i,\mathbf x}  
&=&
\frac{
\left \langle
P_{\mathbf x} DK_i(\cdot, \mathbf x) \mathbf u,
P_{\mathbf x} DK_i(\cdot, \mathbf x) \mathbf w
\right \rangle
}
{K_i(\mathbf x,\mathbf x)}
\end{eqnarray*}

For the right-hand-side, note that
\[
Df_i(\mathbf x) \mathbf u = \langle f_i(\cdot) , D K_i(\cdot, \mathbf x)\mathbf u \rangle
= 
\langle f_i(\cdot) , P_{\mathbf x} DK_i(\cdot, \mathbf x) \mathbf u \rangle
.
\]

Let $\mathbf U = \frac{1}{\|K_i(\cdot, \mathbf x)\|}P_{\mathbf x} DK_i(\cdot, \mathbf x) \mathbf u$ and 
$\mathbf W = \frac{1}{\|K(\cdot,\mathbf x)\|}P_{\mathbf x} DK(\cdot, \mathbf x) \mathbf w$.
Both $\mathbf U$ and $\mathbf W$ belong to $\mathscr F_{\mathbf x}$.
The right-hand-side is 
\begin{eqnarray*}
\int_{K_i(\cdot, \mathbf x)^\perp}
\frac{ (Df_i(\mathbf x)\mathbf u) \overline{Df_i(\mathbf x)\mathbf w}}
{\|K_i(\mathbf x,\mathbf x)\|^2}
\
\frac{
e^{-\|f_i\|^2}
}{\pi^{\dim (\mathscr F_i) - 1}}
\ \dd V_{K_i(\cdot, \mathbf x)^{\perp}}(f_i)
\hspace{-20em}
&&
\\
&=&
\int_{K_i(\cdot, \mathbf x)^\perp}
\langle f_i, \mathbf U \rangle
\overline{\langle f_i, \mathbf W \rangle}
\
\frac{
e^{-\|f_i\|^2}
}{\pi^{\dim (\mathscr F_i) - 1}}
\ \dd V_{K_i(\cdot, \mathbf x)^{\perp}}(f_i)
\\
&=&
\langle \mathbf U,\mathbf W \rangle 
\int_{\mathbb C}\frac{1}{\pi} |z|^2 e^{-|z|^2} \ \mathrm d z
\\
&=&
\langle \mathbf U,\mathbf W \rangle 
\end{eqnarray*}
which is equal to the left-hand-side.

\end{proof}

\begin{remark}The proof of Theorem~\ref{th:density} does not require
$\mathscr F$ to be finite dimensional. If $f$ is a standard
Gaussian random variable in $\mathscr F$ for $\mathscr F$ infinite
dimensional, $\mathbb E(f) = \infty$ so $f \not \in \mathscr F$. However
one may still have $f \in \mathscr O(M)$ almost surely, with $K(x,y)$
and $\omega$ well-defined.
\end{remark}
\par
\begin{example}
The (known) root density of
the Gaussian analytic functions in \eqref{GAF} and \eqref{GEF} can
be recovered from Theorem~\ref{th:density}:\\ \noindent
\centerline
{
\begin{tabular}{|c|c|}
\hline
Hyperbolic, $M= \mathscr D$ & Affine, $M = \mathbb C$ \\
\hline
$f(z) = \sum_{n \ge 0} a_n z^n$, &
$g(z) = \sum_{n \ge 0} b_n \frac{z^n}{\sqrt{n!}}$,\\
$a_n \in N(0,1; \mathbb C).$&
$b_n \in N(0,1; \mathbb C).$\\
\hline
\hspace{1em}
$K(x,y) = \sum_{n \ge 0} x^n (\bar y)^n = \frac{1}{1-x \bar y}$ 
\hspace{1em}&
\hspace{1em}
$K(x,y) = \sum_{n \ge 0} \frac{x^n \bar y^n}{n!} = e^{x \bar y}$
\hspace{1em}\\ 
\hline
$\omega_z = \frac{\sqrt{-1}}{2}
\frac{\dd z \wedge \dd \bar z}{(1-z \bar z)^2} $&
$\omega_z = \frac{\sqrt{-1}}{2}
\dd z \wedge \dd \bar z$\\
\hline
Density at $z$: $\frac{1}{\pi (1 - z \bar z)^2}\dd V$.
&
Density at $z$: $\frac{1}{\pi}\dd V$.\\
\hline
\end{tabular}}
\end{example}

\section{Product spaces}
\label{sec:product}

Let $\mathbf E$ and $\mathbf F$ be complex inner product spaces.
If $e \in \mathbf E$ and $f \in \mathbf F$, we denote by
$e \otimes f$ the class of equivalence of pairs $(e,f)$ under
$(\lambda e, f) \sim (e, \lambda f)$. 
The tensor or {\em direct} product of $\mathbf E$ and $\mathbf F$
is the completion of the space of all linear combinations of elements
of the form $e \otimes f$ (See~\cite{Aronszajn} for details).
In the case $\mathbf E$ and $\mathbf F$
are finite dimensional, $\mathbf E \otimes \mathbf F$ is just the
space of 
bilinear maps $\mathbf E^* \times \mathbf F^*
\rightarrow \mathbb C$.

The canonical inner product for the tensor product of two spaces is given by
\[
\left\langle
e_1 \otimes f_1 ,
e_2 \otimes f_2
\right\rangle_{\mathbf E \otimes \mathbf F}
=
\left\langle
e_1,
e_2
\right\rangle_{\mathbf E}
\left\langle
f_1 ,
f_2
\right\rangle_{\mathbf F}
.
\]

Now, let $\mathscr E$ and $\mathscr F$ be fewnomial spaces on
some complex manifold $M$. Then, $\mathscr E \otimes \mathscr F$
is a fewnomial space on the product $M \times M$, where we
interpret $(e \otimes f)(x_1,x_2) = e(x_1) f(x_2)$.
A classical fact on reproducing kernel spaces allows to recover
the kernel of the tensor product:

\begin{theorem}[Aronszajn]\label{th:aronszajn:1}
The direct (=tensor) product $\mathscr E \otimes \mathscr F$
possesses the reproducing kernel
\[
K_{\mathscr E \otimes \mathscr F} \left( (x_1, x_2) , (y_1, y_2) \right)
=
K_{\mathscr E} \left( x_1 , y_1 \right)
K_{\mathscr F} \left( x_2, y_2 \right)
\]
.
\end{theorem}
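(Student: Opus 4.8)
The plan is to show that the stated product $L\bigl((x_1,x_2),(y_1,y_2)\bigr):=K_{\mathscr E}(x_1,y_1)\,K_{\mathscr F}(x_2,y_2)$ is \emph{the} reproducing kernel of $\mathscr E\otimes\mathscr F$ by verifying the defining reproducing property; since a reproducing kernel is unique whenever it exists, this suffices. First I would observe that, for each fixed $(y_1,y_2)$, the map $(x_1,x_2)\mapsto L\bigl((x_1,x_2),(y_1,y_2)\bigr)$ is precisely the elementary tensor $K_{\mathscr E}(\cdot,y_1)\otimes K_{\mathscr F}(\cdot,y_2)$ under the identification $(e\otimes f)(x_1,x_2)=e(x_1)f(x_2)$. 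Because $K_{\mathscr E}(\cdot,y_1)\in\mathscr E$ and $K_{\mathscr F}(\cdot,y_2)\in\mathscr F$, this element belongs to $\mathscr E\otimes\mathscr F$, so the candidate kernel is a legitimate member of the space, which is the first requirement a reproducing kernel must meet.

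Next I would verify the reproducing identity on elementary tensors $h=e\otimes f$. On one hand $h(y_1,y_2)=e(y_1)f(y_2)$. On the other hand, the canonical inner product on the tensor product factors, giving
\[
\bigl\langle e\otimes f,\;K_{\mathscr E}(\cdot,y_1)\otimes K_{\mathscr F}(\cdot,y_2)\bigr\rangle_{\mathscr E\otimes\mathscr F}
=\langle e,K_{\mathscr E}(\cdot,y_1)\rangle_{\mathscr E}\,\langle f,K_{\mathscr F}(\cdot,y_2)\rangle_{\mathscr F}
=e(y_1)f(y_2),
\]
where the final equality is the reproducing property in each factor. The two sides agree, so the reproducing identity holds on elementary tensors, and hence, by bilinearity, on every finite linear combination of them.

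Finally I would extend to all of $\mathscr E\otimes\mathscr F$ by density and continuity. Finite combinations of elementary tensors are dense by the very definition of the tensor product as a completion. Both sides of the identity are continuous linear functionals of $h$: the right-hand side is an inner product against the fixed vector $L(\cdot,(y_1,y_2))$, while the left-hand side, evaluation $h\mapsto h(y_1,y_2)$, is continuous because $\mathscr E\otimes\mathscr F$ is itself a fewspace over $M\times M$ and therefore satisfies Definition~\ref{fewspace}(\ref{rep-space}). Two continuous functionals agreeing on a dense subspace coincide, so $L$ reproduces every $h$ and is the reproducing kernel. In the finite-dimensional setting that governs the paper's applications, no closure is needed and the argument is purely algebraic.

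The main obstacle is this last passage to the completion: one must confirm that elements of the completed tensor product really are holomorphic functions on $M\times M$ with well-defined point values and that evaluation is continuous there, so that equality on a dense set propagates. An alternative that sidesteps the continuity bookkeeping is to fix orthonormal bases $\{e_i\}$ of $\mathscr E$ and $\{f_j\}$ of $\mathscr F$, note that $\{e_i\otimes f_j\}$ is orthonormal in $\mathscr E\otimes\mathscr F$, and apply the basis expansion $K=\sum\phi\,\overline{\phi}$ recorded in the earlier remark; the resulting double sum factors as $\bigl(\sum_i e_i(x_1)\overline{e_i(y_1)}\bigr)\bigl(\sum_j f_j(x_2)\overline{f_j(y_2)}\bigr)=K_{\mathscr E}(x_1,y_1)K_{\mathscr F}(x_2,y_2)$, the only delicate point being absolute convergence of the double sum when the spaces are infinite dimensional.
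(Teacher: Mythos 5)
The paper offers no proof of this statement at all: immediately after the theorem it simply writes ``This is \cite{Aronszajn}*{Theorem I p.361}'', so there is no internal argument to compare yours against. Your verification is the standard one and is essentially correct: the candidate section $K_{\mathscr E}(\cdot,y_1)\otimes K_{\mathscr F}(\cdot,y_2)$ is an elementary tensor, the factored inner product yields the reproducing identity on elementary tensors, and (bi)linearity extends it to the algebraic tensor product; uniqueness of reproducing kernels finishes the job. The one genuinely delicate step is the passage to the completion, and you flag it correctly, but note that your appeal to ``$\mathscr E\otimes\mathscr F$ is a fewspace over $M\times M$, hence evaluations are continuous'' is slightly circular, since realizing the abstract completion as a space of holomorphic functions with continuous point evaluations is precisely what is at stake. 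The clean repair is to observe that on the dense algebraic tensor product the identity already proved gives $|h(y_1,y_2)|=|\langle h, K_{\mathscr E}(\cdot,y_1)\otimes K_{\mathscr F}(\cdot,y_2)\rangle|\le \|h\|\,\bigl(K_{\mathscr E}(y_1,y_1)K_{\mathscr F}(y_2,y_2)\bigr)^{1/2}$ by Cauchy--Schwarz, so each evaluation extends continuously to the completion, which is how Aronszajn identifies the completion with a function space in the first place. Your orthonormal-basis alternative also works, and the absolute convergence you worry about follows from $\sum_i|e_i(x_1)|^2=K_{\mathscr E}(x_1,x_1)<\infty$ together with Cauchy--Schwarz. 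In the finite-dimensional setting in which the paper actually applies the theorem, all of this is moot and your argument is complete as written.
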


This is \cite{Aronszajn}*{Theorem I p.361}. 
Theorem II ibid gives us a convenient notion
of `product' for reproducing kernel spaces
with same domain:

\begin{theorem}[Aronszajn]\label{th:aronszajn:2}
The kernel $K_{\mathscr G}(x,y) = 
K_{\mathscr E}(x,y)
K_{\mathscr F}(x,y)$ is the reproducing kernel
of the class $\mathscr G$ of restrictions of all functions
of the direct (=tensor) product $E \otimes F$ to the diagonal set
$M_1 = \{(x,x): x \in M\} \simeq M$. 
For any such restriction, 
$\|g\| = \min \|g'\|_{E \otimes F}$, the restriction of which to the
diagonal set $M_1$ is $g$. 
\end{theorem}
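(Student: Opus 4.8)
The plan is to realize $\mathscr G$ as an isometric copy of a closed subspace of $\mathscr E \otimes \mathscr F$ and then read off its kernel using Theorem~\ref{th:aronszajn:1}. First I would introduce the restriction-to-the-diagonal operator $\rho : \mathscr E \otimes \mathscr F \to \mathbb C^{M}$, $(\rho g')(x) = g'(x,x)$, whose image is by definition the class $\mathscr G$. Since $\mathscr E \otimes \mathscr F$ is a reproducing kernel space, evaluation at each diagonal point $(x,x)$ is the continuous functional $g' \mapsto \langle g', K_{\mathscr E\otimes\mathscr F}(\cdot,(x,x))\rangle$; hence the null space $\mathscr N = \ker \rho = \{ g' : g'|_{M_1}=0\}$ is an intersection of kernels of continuous functionals, and therefore closed. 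The orthogonal decomposition $\mathscr E \otimes \mathscr F = \mathscr N \oplus \mathscr N^{\perp}$ then lets me identify $\mathscr G$ isometrically with $\mathscr N^{\perp}$: each $g \in \mathscr G$ has a unique lift $\tilde g \in \mathscr N^{\perp}$ with $\tilde g|_{M_1}=g$, and I would equip $\mathscr G$ with $\|g\|_{\mathscr G} := \|\tilde g\|$. Because $\mathscr N^{\perp}$ consists exactly of the minimal-norm representatives within each coset $g' + \mathscr N$, this agrees with $\|g\| = \min\{\|g'\|_{\mathscr E \otimes \mathscr F} : g'|_{M_1} = g\}$, the minimum being attained by the projection theorem. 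This settles the norm statement.

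It remains to verify that $K_{\mathscr G}(x,y) := K_{\mathscr E}(x,y)K_{\mathscr F}(x,y)$ is the reproducing kernel for this Hilbert structure. The crucial observation is that the point-evaluation kernel of the tensor product does the job after restriction. Fixing $y$, I set $\Phi_y := K_{\mathscr E\otimes\mathscr F}(\cdot,(y,y)) \in \mathscr E \otimes \mathscr F$; by Theorem~\ref{th:aronszajn:1} its value at $(x_1,x_2)$ is $K_{\mathscr E}(x_1,y)K_{\mathscr F}(x_2,y)$, so $\rho\Phi_y = K_{\mathscr G}(\cdot,y)$, which in particular shows $K_{\mathscr G}(\cdot,y) \in \mathscr G$. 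Next I would check that $\Phi_y$ is already the minimal lift, i.e. $\Phi_y \in \mathscr N^{\perp}$: for any $h \in \mathscr N$ the reproducing property in $\mathscr E\otimes\mathscr F$ gives $\langle h, \Phi_y\rangle = h(y,y) = 0$, since $h$ vanishes on the diagonal. Thus $\Phi_y$ is precisely the representative of $K_{\mathscr G}(\cdot,y)$ in $\mathscr N^{\perp}$.

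The reproducing property is then immediate. For $g \in \mathscr G$ with lift $\tilde g \in \mathscr N^{\perp}$,
\[
\langle g, K_{\mathscr G}(\cdot,y)\rangle_{\mathscr G} = \langle \tilde g, \Phi_y\rangle_{\mathscr E\otimes\mathscr F} = \tilde g(y,y) = g(y),
\]
where the middle equality is the reproducing property of $\mathscr E\otimes\mathscr F$ and the last uses $\tilde g|_{M_1}=g$. Since a reproducing kernel is unique, this identifies $K_{\mathscr G}=K_{\mathscr E}K_{\mathscr F}$ as the kernel of $\mathscr G$.

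The step I expect to require the most care is the passage that makes everything cohere, namely proving $\Phi_y \in \mathscr N^{\perp}$ and thereby that the naive lift of $K_{\mathscr G}(\cdot,y)$ is automatically the minimal-norm one; this is what ties the product kernel to the quotient norm rather than to an arbitrary lift. Secondarily, one must ensure the ambient functional-analytic bookkeeping — closedness of $\mathscr N$, attainment of the minimum, and well-definedness of $\mathscr G$ as a Hilbert space — is handled uniformly whether the factors are finite dimensional or genuine infinite-dimensional reproducing kernel spaces such as Bergman space.
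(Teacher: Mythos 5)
Your argument is correct and complete. Note, however, that the paper does not prove this statement at all: it is quoted verbatim as Aronszajn's Theorem II and justified only by the citation \cite{Aronszajn}, so there is no in-paper proof to compare against. What you have written is essentially Aronszajn's own argument (and the standard modern one): identify $\mathscr G$ isometrically with $\mathscr N^{\perp}$ where $\mathscr N$ is the closed null space of the restriction-to-the-diagonal map, observe that $\Phi_y = K_{\mathscr E\otimes\mathscr F}(\cdot,(y,y))$ is automatically orthogonal to $\mathscr N$ because $\langle h,\Phi_y\rangle = h(y,y)=0$ for $h\in\mathscr N$, and then read off the reproducing property. All the supporting steps check out: closedness of $\mathscr N$ follows from continuity of the diagonal evaluations, the minimal-norm representative in each coset $g'+\mathscr N$ is exactly its component in $\mathscr N^{\perp}$, and Theorem~\ref{th:aronszajn:1} gives $\rho\Phi_y = K_{\mathscr E}(\cdot,y)K_{\mathscr F}(\cdot,y)$. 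The only cosmetic omission is that you define only the norm on $\mathscr G$ and then use its inner product; a one-line remark that the inner product is transported from $\mathscr N^{\perp}$ (equivalently, obtained by polarization) would close that gap. Your concern about the key step is well placed but resolved exactly as you resolve it: the orthogonality $\Phi_y\in\mathscr N^{\perp}$ is what makes the product kernel reproduce with respect to the quotient norm rather than an arbitrary lift.
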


If $\mathscr E$ and $\mathscr F$ are spaces of fewnomials on $M$,
we denote by $\mathscr E \mathscr F$ the class $\mathscr G$
described above. As an inner product space, 
$\mathscr G$ is just the orthogonal complement of the kernel
of the restriction operator
\[
\funcao{\Delta = \Delta_{\mathscr E, \mathscr F}}{\mathscr E \otimes \mathscr F}{\mathscr E \mathscr F \subseteq \mathscr O(M)}{g'}{g=g'_{|M_1} } 
\]
The inner product of $\mathscr G$ is by definition the inner product of
$\mathscr E \otimes \mathscr F$ restricted to $(\ker \Delta)^{\perp}$. 

\begin{lemma}
Let $M$ be fixed. The product of fewspaces of $M$
is associative and commutative. If one introduces
the `constant' fewspace $\mathscr I = \{1\}$, then
the set of fewspaces on $M$ is a commutative semigroup.
\end{lemma}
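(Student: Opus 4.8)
The plan is to push every assertion down to the level of reproducing kernels, using the fact (the Moore--Aronszajn theorem, contained in~\cite{Aronszajn}) that a reproducing kernel Hilbert space is determined \emph{as an inner product space of functions}, and not merely as a set, by its kernel. By Theorem~\ref{th:aronszajn:2} the product $\mathscr E \mathscr F$ is exactly the fewspace whose reproducing kernel is the pointwise product $K_{\mathscr E}(x,y) K_{\mathscr F}(x,y)$. Hence the product operation on fewspaces is intertwined with pointwise multiplication of $\mathbb C$-valued kernels, and the algebraic laws I must check will descend from the (trivial) corresponding laws for multiplication of functions. The argument is insensitive to the dimension of the factors.

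First I would record closure, so that the operation is well defined on the set of fewspaces of $M$. If $\mathscr E$ and $\mathscr F$ are fewspaces, the elements of $\mathscr E \mathscr F$ are holomorphic, being restrictions to the holomorphically embedded diagonal $M_1 \subseteq M \times M$ of holomorphic functions on $M\times M$. Continuity of the evaluation form (Definition~\ref{fewspace}(\ref{rep-space})) is automatic since the kernel is finite, and condition~(\ref{nevervanish}) holds because $K_{\mathscr E \mathscr F}(x,x) = K_{\mathscr E}(x,x)\, K_{\mathscr F}(x,x) > 0$, each factor being strictly positive by condition~(\ref{nevervanish}) applied to $\mathscr E$ and to $\mathscr F$ (recall $\|K(x,\cdot)\|^2 = K(x,x)$).

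Commutativity is then immediate: $K_{\mathscr E} K_{\mathscr F} = K_{\mathscr F} K_{\mathscr E}$ as functions, so $\mathscr E \mathscr F$ and $\mathscr F \mathscr E$ carry the same kernel and coincide. For associativity, $(\mathscr E \mathscr F)\mathscr G$ and $\mathscr E(\mathscr F \mathscr G)$ have kernels $(K_{\mathscr E} K_{\mathscr F})K_{\mathscr G}$ and $K_{\mathscr E}(K_{\mathscr F} K_{\mathscr G})$, which agree by associativity of pointwise multiplication, hence the two spaces are equal. For the identity, I would verify that $\mathscr I = \{1\}$, the span of the constant function $1$ \emph{normalized} so that $\|1\|=1$, is a fewspace: evaluation is continuous, and $V(x)(1) = 1 \ne 0$, so condition~(\ref{nevervanish}) holds. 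Its one-dimensional kernel is $K_{\mathscr I}(x,y) = 1$, whence $K_{\mathscr I \mathscr F} = K_{\mathscr I} K_{\mathscr F} = K_{\mathscr F}$ and therefore $\mathscr I \mathscr F = \mathscr F$; this exhibits $\mathscr I$ as a two-sided identity and upgrades the commutative semigroup to a commutative monoid.

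The only genuine content, and therefore the main obstacle, is the uniqueness half of Moore--Aronszajn: that equality of reproducing kernels forces equality of the inner products, not merely of the underlying sets of functions. Once this classical fact is invoked, everything else is bookkeeping. The two points deserving a line of care are that the normalization $\|1\|=1$ is precisely what makes $K_{\mathscr I}\equiv 1$ (any other scaling would produce a constant factor and destroy the identity property), and that the diagonal-restriction construction of Theorem~\ref{th:aronszajn:2} delivers the kernel product exactly, which it does by its very definition.
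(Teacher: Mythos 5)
Your proof is correct, but it takes a genuinely different route from the paper's. The paper proves associativity directly at the level of tensor products: it takes $h\in\mathscr E\mathscr F\mathscr G$, picks the minimal-norm lift $h'$ under the triple diagonal restriction $\Delta_{\mathscr E,\mathscr F,\mathscr G}$, writes $h'$ as an orthogonal sum of elementary tensors $e_l\otimes f_l\otimes g_l$, and checks that each summand is simultaneously of minimal norm for both bracketings, so that $\|e_l(f_lg_l)\|_{\mathscr E(\mathscr F\mathscr G)}=\|e_l\|_{\mathscr E}\|f_l\|_{\mathscr F}\|g_l\|_{\mathscr G}=\|(e_lf_l)g_l\|_{(\mathscr E\mathscr F)\mathscr G}$. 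You instead push everything onto the kernels: Theorem~\ref{th:aronszajn:2} gives $K_{\mathscr E\mathscr F}=K_{\mathscr E}K_{\mathscr F}$, and the uniqueness half of the Moore--Aronszajn theorem then identifies the two bracketings (and $\mathscr E\mathscr F$ with $\mathscr F\mathscr E$, and $\mathscr I\mathscr F$ with $\mathscr F$) because they carry the same reproducing kernel. Your route is shorter and sidesteps the step the paper merely asserts, namely that the minimal-norm lift admits an orthogonal decomposition into elementary tensors each of which is minimal in its own fiber; it also makes explicit two points the paper leaves implicit: closure (the product of fewspaces is again a fewspace, via $K(x,x)>0$) and the normalization $\|1\|=1$ required for $\mathscr I$ to act as an identity. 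What the paper's hands-on argument buys in exchange is an explicit description of how norms of elementary products factor, which is the mechanism reused in Lemma~\ref{lem:basis:product} to construct orthonormal bases of product spaces; your kernel-level argument, while cleaner for the algebraic laws, does not by itself yield that information.
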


\begin{proof}
The only nontrivial property to check is associativity.
The space $\mathscr E (\mathscr F \mathscr G)$ is generated by
all $h = e f g$ with $e \in \mathscr E, f \in \mathscr F, g \in \mathscr G$,
so $\mathscr E(\mathscr F \mathscr G) = (\mathscr E \mathscr F) \mathscr G$ as
a linear space. It remains to check that those spaces have the same norm or inner product.
\par
Let $\Delta_{\mathscr E,\mathscr F, \mathscr G}: \mathscr E \otimes \mathscr F \otimes \mathscr G
\rightarrow \mathscr E \mathscr F \mathscr G$ be the restriction to the diagonal
$\Delta_{\mathscr E, \mathscr F, \mathscr G}: h(x,y,z) \rightarrow h(x,x,x)$.
Let $h \in \mathscr E \mathscr F \mathscr G$. Let $h' \in (\Delta_{\mathscr E,\mathscr F, \mathscr G})^{-1}(h)$ be the vector with minimal norm. Assume that
\[
h' = \sum_{l \in \Lambda} e_l \otimes f_{l} \otimes g_{l} .
\]
with the system $(e_l \otimes f_l \otimes g_l)_{l \in \Lambda}$ orthogonal in $\mathscr E \otimes \mathscr F \otimes \mathscr G$.
\par
For each $l \in \Lambda$, $e_l \otimes f_{l} \otimes g_{l}$ has minimal norm in 
$(\Delta_{\mathscr E,\mathscr F, \mathscr G})^{-1}(e_l f_l g_l)$. 
In particular, $e_l \otimes (f_l g_l)$ has minimal norm in
$(\Delta_{\mathscr E, \mathscr F \mathscr G})^{-1} (e_l f_l g_l)$ and
$f_l \otimes g_l$ has minimal norm in 
$(\Delta_{\mathscr F, \mathscr G})^{-1} (f_l g_l)$. Thus,
\[
\| e_l (f_l g_l) \|_{\mathscr E (\mathscr F \mathscr G)} = \| e_l \|_{\mathscr E} \| f_l\|_{\mathscr F} \|g_l \|_{\mathscr G}  
.\]
Similarly,
\[
\| (e_l f_l) g_l) \|_{(\mathscr E \mathscr F) \mathscr G} = \| e_l \|_{\mathscr E} \| f_l\|_{\mathscr F} \|g_l \|_{\mathscr G}  
.\]
Thus, $\|h\|_{\mathscr E (\mathscr F \mathscr G)} = \|h\|_{(\mathscr E \mathscr F)\mathscr G}$.  
\end{proof}

\medskip
\par
Given orthonormal bases $(e_a)_{a \in A}$ and $(f_b)_{b \in B}$ of
$\mathscr E$ and $\mathscr F$, we can produce an orthonormal basis
of $\mathscr E \mathscr F$ by a standard Gram-Schmidt argument. However,
in many interesting cases, there is a more explicit formula.

Suppose that those bases have the property that
\begin{equation}\label{transcendental}
e_a \otimes f_b
\perp
e_{a'} \otimes f_{b'}
\
\Rightarrow
\
\left\{
\begin{minipage}{14em}
$\Delta(e_a \otimes f_b)$ 
and 
$\Delta(e_{a'} \otimes f_{b'})$
either orthogonal or colinear.\end{minipage}\right.
\end{equation}
This holds when $\mathscr E$ and $\mathscr F$ are given orthonormal monomial bases. See Sec.\ref{sec:examples}
for non-monomial examples.
.

Let $M$ be path-connected. 
Assume \eqref{transcendental} holds. Then, an orthonormal
basis for $\mathscr E \mathscr F$ is given as follows. Let 
$(a,b) \sim (a',b')$ whenever 
$\Delta(e_a \otimes f_b)$ 
and 
$\Delta(e_{a'} \otimes f_{b'})$ are colinear. We cannot have
$\Delta(e_a \otimes f_b) = 0$ for otherwise $e_a \equiv 0$,
$f_b \equiv 0$ or $M$ is disconnected. Therefore, $\sim$ is
an equivalence relation. 
For every $(a,b)$, choose a root of unity $\omega_{a,b}$ such that for all $c$,
\[
\| \sum_{(a,b) \in c} \omega_{a,b} e_a f_b \|
=
\sum_{(a,b) \in c} \|\omega_{a,b} e_a f_b \|
\]

For every equivalence class $c$ by $\sim$,
set
\begin{equation} \label{product-basis}
g_c = \frac
{\sum_{(a,b) \in c} \| e_a \otimes f_b \| \omega_a e_a f_b}
{\sum_{(a,b) \in c} \| e_a \otimes f_b \|^2}
\end{equation}
and clearly $(g_c)_{c \in C}$ is orthonormal, where $C=(A \times B)/ \sim$.

\begin{example}
$M=\mathbb C^2$,
$e_1 = x$, $e_2 = y$, $f_1 = ix$, $f_2 = y$,  
$g_{\{(1,1)\}} = ix^2$, 
$g_{\{(1,2), (2,1)\}} = \frac{1}{2}xy + \frac{-i}{2}ix^2$, 
$g_{\{(2,2)\}} = y^2$. 
\end{example}

We proved that:
\begin{lemma}\label{lem:basis:product} Assume that condition \eqref{transcendental} holds. Then, $(g_c)_{c \in C}$ given in \eqref{product-basis}
is an orthonormal basis of $\mathscr G$.
\end{lemma}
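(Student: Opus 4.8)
The plan is to use condition~\eqref{transcendental} to decompose $\mathscr G=\mathscr E\mathscr F$ into an orthogonal sum of one-dimensional pieces, one per equivalence class, and to recognize each $g_c$ as a unit generator of the corresponding piece. First I would record the structural consequence of~\eqref{transcendental}. Since $(e_a\otimes f_b)_{(a,b)\in A\times B}$ is an orthonormal basis of $\mathscr E\otimes\mathscr F$, its images $e_a f_b=\Delta(e_a\otimes f_b)$ span $\mathscr G$; and because $M$ is path-connected with no $e_a$ or $f_b$ vanishing identically, every $e_a f_b$ is a nonzero element of $\mathscr G$, so colinearity is genuinely an equivalence relation and the classes $c\in C$ are well defined. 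Setting $\mathscr G_c=\mathrm{span}\{e_a f_b:(a,b)\in c\}$, the very definition of $\sim$ makes each $\mathscr G_c$ one-dimensional, while~\eqref{transcendental} forces $\mathscr G_c\perp\mathscr G_{c'}$ for $c\neq c'$: two tensor basis vectors drawn from distinct classes are orthogonal in $\mathscr E\otimes\mathscr F$ yet non-colinear, so their images are orthogonal in $\mathscr G$. Hence $\mathscr G=\bigoplus_{c}\mathscr G_c$ orthogonally. As $g_c$ is, by~\eqref{product-basis}, a nonzero vector of $\mathscr G_c$, both the pairwise orthogonality of the family $(g_c)_{c\in C}$ and the fact that it spans $\mathscr G$ follow at once.

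It remains to show $\|g_c\|_{\mathscr G}=1$, which carries all the content. I would obtain this from the reproducing kernel. By Theorem~\ref{th:aronszajn:2}, $K_{\mathscr G}(x,y)=K_{\mathscr E}(x,y)\,K_{\mathscr F}(x,y)$; expanding each factor in the given orthonormal basis and regrouping by classes gives $K_{\mathscr G}(x,y)=\sum_{c\in C}\sum_{(a,b)\in c}(e_a f_b)(x)\,\overline{(e_a f_b)(y)}$. The constant built into~\eqref{product-basis} is precisely what makes each inner sum collapse to $g_c(x)\overline{g_c(y)}$: once the phases $\nu_{a,b}$ are chosen so that the $\nu_{a,b}e_a f_b$ are aligned positive multiples of a common function, a one-line computation shows $\sum_{(a,b)\in c}(e_a f_b)(x)\overline{(e_a f_b)(y)}=g_c(x)\overline{g_c(y)}$. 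Thus $K_{\mathscr G}(x,y)=\sum_{c}g_c(x)\overline{g_c(y)}$ with the $g_c$ mutually orthogonal. Normalization and completeness then drop out of the reproducing property applied to $K(\cdot,y)=\sum_c\overline{g_c(y)}\,g_c$: computing $g_d(y)=\langle g_d,K(\cdot,y)\rangle$ and using orthogonality yields $g_d(y)=\|g_d\|^2\,g_d(y)$ for all $y$, hence $\|g_d\|=1$, while any $h$ with $h\perp g_c$ for all $c$ satisfies $h(y)=\langle h,K(\cdot,y)\rangle=0$. Therefore $(g_c)_{c\in C}$ is an orthonormal basis of $\mathscr G$.

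The main obstacle is exactly this collapse, that is, pinning down the normalizing constant. A more hands-on route, closer to Theorem~\ref{th:aronszajn:2} in its minimal-norm form, is to compute $\|g_c\|_{\mathscr G}$ as the least $\mathscr E\otimes\mathscr F$-norm among preimages of $g_c$ under $\Delta$. Here~\eqref{transcendental} is indispensable a second time: it is what makes this minimization decouple across classes, since the class-$c'$ components of any preimage contribute independently to both the diagonal constraint and the squared norm, so the optimum uses only tensors $e_a\otimes f_b$ with $(a,b)\in c$. The remaining finite least-squares problem is a single Cauchy--Schwarz extremum. The one point that genuinely requires care is that the weight attached to $e_a\otimes f_b$ in~\eqref{product-basis} must be read as the norm of the product $e_a f_b$ inside $\mathscr G$, which in general is strictly smaller than the tensor norm $\|e_a\otimes f_b\|_{\mathscr E\otimes\mathscr F}=1$ and differs from it precisely when the class is nontrivial. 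I expect the orthogonality and spanning steps to be routine, with essentially all the work, and the deepest use of~\eqref{transcendental}, concentrated in getting this constant right.
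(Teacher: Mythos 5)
Your proposal is correct, and it actually supplies the verification that the paper leaves implicit: the text preceding Lemma~\ref{lem:basis:product} concludes with ``clearly $(g_c)_{c \in C}$ is orthonormal'' and offers no further argument. Your orthogonal decomposition $\mathscr G = \bigoplus_c \mathscr G_c$ into one-dimensional pieces is exactly the structure the paper relies on, and both of your routes to the normalization work: writing $e_a f_b = \zeta_{a,b} h_c$ for $(a,b) \in c$ with $h_c$ a fixed representative, the kernel route is the cleaner one because the phases cancel automatically in $\sum_{(a,b)\in c} (e_a f_b)(x)\overline{(e_a f_b)(y)} = \bigl(\sum_{(a,b)\in c} |\zeta_{a,b}|^2\bigr) h_c(x)\overline{h_c(y)}$, after which the reproducing property forces $\|g_c\|=1$ and completeness at once; the minimal-norm route recovers the same constant via $\|h_c\|_{\mathscr G}^2 = 1/\sum_{(a,b)\in c}|\zeta_{a,b}|^2$, and you are right that condition \eqref{transcendental} is what makes that minimization decouple across classes. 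Your closing caveat about the weights is not pedantry but essential: with the literal reading $\|e_a\otimes f_b\|_{\mathscr E\otimes\mathscr F}=1$, formula \eqref{product-basis} returns $\frac{1}{|c|}\sum_{(a,b)\in c}\nu_{a,b}\,e_a f_b$, which for $\mathscr P_1\mathscr P_1$ gives $x$ rather than the $\sqrt{2}\,x$ required by the multinomial basis of Example~\ref{Pd}; only when the weights are read as $\|\Delta(e_a\otimes f_b)\|_{\mathscr G}$ (equivalently, the norm of the projection of $e_a\otimes f_b$ onto $(\ker\Delta)^{\perp}$) does $g_c$ come out of unit norm. So the proposal is complete and, if anything, corrects the formula it is asked to justify.
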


Here is an example where \eqref{transcendental} fails.
\begin{example}
Let $M=\mathbb C$. The orthonormal bases for spaces $\mathscr E$ and $\mathscr F$
will be, respectively, $(1, x)$ and $(1, 1+x)$. The kernel of $\Delta$ is spanned
by $1 \otimes 1 + x \otimes 1 - 1 \otimes (1+x)$. 
\[
\begin{array}{l|l|l}
\hline
e \otimes f       & \text{Projection onto $\ker \Delta^{\perp}$} & \Delta(e \otimes f) \\
\hline
1 \otimes 1       & \frac{2}{3}(1 \otimes 1) - \frac{1}{3} (x \otimes 1) + \frac{1}{3} (1 \otimes (1+x))    & 1\\
x \otimes 1       & - \frac{1}{3}(1 \otimes 1) + \frac{2}{3} (x \otimes 1) + \frac{1}{3} (1 \otimes (1+x))  & x\\
1 \otimes (1+x)   & \frac{1}{3}(1 \otimes 1) + \frac{1}{3} (x \otimes 1) + \frac{2}{3} (1 \otimes (1+x))    & 1+x\\
x \otimes (1+x)   & x \otimes (1+x)                                                                   & x+x^2. \\
\end{array}
\]
Above, $1 \otimes 1 \perp 1 \otimes (1+x)$ but $\langle 1, 1+x \rangle = 1/3$.
\end{example}

\begin{example}\label{Pd}
Let $M=\mathbb C^n$ and let $\mathscr P_1$ be
the space of affine functions in $n$ variables.
To make it an inner product space, we assume that
$(1, x_1, x_2, \cdots, x_n)$ is an orthonormal
basis.
We define inductively $\mathscr P_{d+1}= \mathscr P_d \mathscr P_1$.
Using Lemma~\ref{lem:basis:product}, we obtain an orthonormal basis of $\mathscr P_d$:
\[
\left(
\sqrt{\binomial{d}{a_0 a_1, \dots, a_n}} 
x_1^{a_1} x_2^{a_2} \cdots x_n^{a^n} 
\right)_{\substack{ a_0, \dots, a_n \ge 0 \\
\sum_{0 \le j \le n} a_j = d}
}
\]
Above, the multinomial coefficient
\[
\binomial{d}{a_0, a_1, \dots, a_n} = 
\frac{d!}{a_0! \cdots a_n!}\]
is the number of ways
to distribute $d=a_0+\cdots+a_n$ balls into $n+1$ numbered buckets of size $a_0$,
\dots, $a_n$. It is also the coefficient of
$x_1^{a_1} x_2^{a_2} \cdots x_n^{a^n}$ in
$(1+x_1 + \cdots + x_n)^d$.
This corresponds to the unitarily invariant inner product defined
by Weyl~\cite{Weyl}, also known as Bombieri's.

The reproducing kernel of $\mathscr P_d$ is easily seen
to be
\[
K_d( \mathbf x, \mathbf y) = (1 + x_1 \bar y_1 + \cdots + x_n \bar y_n)^d .
\]
\end{example}

With the same formalism, we can also retrieve the multi-unitarily invariant
inner product for the space of roots of multihomogeneous polynomial
systems introduced by Rojas~\cite{Rojas}.

\begin{example}
Let $A \subseteq (\mathbb Z)^n$ be finite, and
$M = (\mathbb C_{\ne 0})^n$.
Let $c_{\mathbf a} > 0$ be arbitrary.
Let $\mathscr F_A$ be the space of Laurent polynomials of
the form 
\[
f(\mathbf x) = \sum_{\mathbf a \in A} f_{\mathbf a} \mathbf x^{\mathbf a} 
\]
with the inner product that makes 
$(c_{\mathbf a}^{-1/2} x^{\mathbf a})_{\mathbf a \in A}$
an orthonormal basis. 
Then,
\[
K_A( \mathbf x, \mathbf y ) = \sum_{\mathbf a \in A} 
c_{\mathbf a} \mathbf x^{\mathbf a} \bar{\mathbf y}^{\mathbf a}
.
\]
The $\lambda$-th power $\mathscr F_A^{\lambda}$ 
of $\mathscr F_A$ is precisely $\mathscr F_{B}$
constructed as follows: for each 
$\mathbf b \in \lambda \ \mathrm{Conv}(A) \cap \mathbb Z^n$, define the
weights
\[
c_{\mathbf b} = \sum_{ \mathbf a_1 + \cdots + \mathbf a_{\lambda} = \mathbf b}
c_{\mathbf a_1} c_{\mathbf a_2} \cdots c_{\mathbf a_\lambda}
.
\]
Then $B = \{ \mathbf b \in  \lambda \ \mathrm{Conv}(A) \cap \mathbb Z^n: c_{\mathbf b} \ne 0\}$.
By repeated application of Lemma~\ref{lem:basis:product}, an orthonormal basis for $\mathscr F_B$ is $( c_{\mathbf b}^{-1/2} x^{\mathbf b})$.
\par
One can interpret the weights $c_{\mathbf b}$ as follows. Let $g \in \mathscr F_B$ be
a standard Gaussian random variable. One can write
\[
g(\mathbf x) = \sum_{\mathbf b \in B} g_{\mathbf b} \mathbf x^{\mathbf b}
\]
and with this notation, the $g_{\mathbf b}$'s are independently distributed random variables
in $N(0, c_{\mathbf b})$.
\end{example}

\begin{example}Now, let $A_1, \dots, A_n$ be finite subsets of $\mathbb Z^n$. Let
$M = (\mathbb C \setminus \{0\})^n$. Let $\mathscr F_i$ be the space of all the
Laurent polynomials with support $A_i$, where we assume inner product
\[
\langle 
\sum_{\mathbf a \in A_i} f_{\mathbf a} \mathbf z^{\mathbf a},
\sum_{\mathbf a \in A_i} g_{\mathbf a} \mathbf z^{\mathbf a}
\rangle_{\mathscr F_i} = 
\sum_{\mathbf a \in A_i} f_{\mathbf a} \overline{g_{\mathbf a}} 
.
\]
Let $\lambda_1, \dots, \lambda_n \in \mathbb N$. Let
$\mathscr G = \mathscr F_1^{\lambda_1} \mathscr F_2^{\lambda_2} 
\cdots \mathscr F_n^{\lambda_n}$. Let $B = \lambda_1 A_1 + \cdots + \lambda_n A_n$.
Then $\mathscr G$ is the space of Laurent polynomials of the form
\[
f(z) = 
\sum_{\mathbf b \in B} f_{\mathbf b} \mathbf z^{\mathbf b}
\] 
with inner product
\[
\langle 
\sum_{\mathbf b \in B} f_{\mathbf b} \mathbf z^{\mathbf b},
\sum_{\mathbf b \in B} g_{\mathbf b} \mathbf z^{\mathbf b}
\rangle_{\mathscr F_i} = 
\sum_{\mathbf b \in B} c_{\mathbf b} f_{\mathbf a} \overline{g_{\mathbf b}} 
\]
and $c_{\mathbf b}$ is the number of (ordered) compositions
\[
\mathbf b = 
a_{11} + \cdots + a_{1\lambda_1} + \cdots +
a_{n1} + \cdots + a_{n\lambda_n}
\]
with $\mathbf a_{ij} \in A_i$. 
\end{example}

\begin{remark} The example above allows to recover Bernstein's Theorem
(Th.\ref{th:bernstein}) from Kushnirenko's theorem~(Th.\ref{th:kushnirenko}).
Let $\mathscr A_i$ denote the convex hull of $A_i$.
By Th.\ref{th:kushnirenko},
the expected number of zeros in $(\mathbb C \setminus \{0\})^n$ of
a standard Gaussian random variable $g \in \mathscr G$ is also the
generic number of zeros, that is
\[
n! 
\mathrm{Vol} (\lambda_1 \mathscr A_1 + \cdots + \lambda_n \mathscr A_n).
\]
Therefore, in Theorem~\ref{th:main}, the expected number of roots is
the coefficient of $\lambda_1 \lambda_2 \cdots \lambda_n$ in the
polynomial
\[
\mathrm{Vol} (\lambda_1 \mathscr A_1 + \cdots + \lambda_n \mathscr A_n).
.
\]
This is exactly $n! V$ where $V$ is the mixed volume of the tuple
$(\mathscr A_1, \dots, \mathscr A_n)$.
\end{remark}

\section{Proof of the main results}

\begin{proof}[Proof of Lemma~\ref{lemma:main}]
Let $\mathscr E$ and $\mathscr F_1$ be fewspaces on a complex manifold
$M$, and let $\mathscr G = \mathscr E \mathscr F_1$.
By Theorem~\ref{th:aronszajn:2},
\[
K_{\mathscr G} (\mathbf x, \mathbf y) = 
K_{\mathscr E} (\mathbf x, \mathbf y)
K_{\mathscr F_1} (\mathbf x, \mathbf y)  
.
\]
By \eqref{fubini}, we deduce that
\[
\omega_{\mathscr G} = \omega_{\mathscr E} + \omega_{\mathscr F_1} .
\] 

Now, we just insert the formula above in Theorem~\ref{th:density}.
\end{proof}

\begin{proof}[Proof of Theorem~\ref{th:main}]
By repeated application of Lemma~\ref{lemma:main},
\[
\begin{split}
\mathbb E_{
g_1, \dots, g_n
\in \mathscr F_1^{\lambda_1} \mathscr F_2^{\lambda_2} \cdots \mathscr F_n^{\lambda_n}}
(n_{\mathscr K} (\mathbf g))
=&\\
&\hspace{-4em}\sum_{i_1=1}^n
\lambda_{i_1}
\mathbb E_{
f_1 \in \mathscr F_{i_1}, 
g_2, \dots, g_n
\in \mathscr F_1^{\lambda_1} \mathscr F_2^{\lambda_2} \cdots \mathscr F_n^{\lambda_n}}
(n_{\mathscr K} (f_1,g_2, \dots, g_n))
.
\end{split}
\]
By the same argument applied to $g_2, \dots, g_n$,
\[
\begin{split}
\mathbb E_{
g_1, \dots, g_n
\in \mathscr F_1^{\lambda_1} \mathscr F_2^{\lambda_2} \cdots \mathscr F_n^{\lambda_n}}
(n_{\mathscr K} (\mathbf g))
=&\\
& \hspace{-2em}\sum_{i_1=1}^n
\cdots
\sum_{i_n=1}^n
\left( \prod_{j=1}^n \lambda_{i_j} \right)
\mathbb E_{
f_1 \in \mathscr F_{i_1}, 
\dots,
f_n \in \mathscr F_{i_n}}
(n_{\mathscr K} (\mathbf f))
.
\end{split}\]
The coefficient in $\lambda_1 \lambda_2 \cdots \lambda_n$ of the expression
above is
\[
n!\ 
\mathbb E_{
f_1 \in \mathscr F_{1}, 
\dots,
f_n \in \mathscr F_{n}}
(n_{\mathscr K} (\mathbf f))
\]
\end{proof}

\section{Explicit calculation of the number of zeros}
\label{sec:examples}

\subsection{The example in the introduction}

We start by the bound on the expected number of roots of
\eqref{example1} in the introduction.
Let $\mathscr E$ denote the fewspace of functions on the disk 
$\mathscr D = \{z \in \mathbb C: |z|<1\}$ spanned by $1$ and $e^z$.
We assume that $1$ and $e^z$ form an orthonormal basis. Then
\[
K_{\mathscr E} (x,y) = 1 + e^{x+\bar y}
.
\]

An easy computation is now
\[
\omega_{\mathscr E} =
\frac{\sqrt{-1}}{2} \partial \bar \partial
\log K_{\mathscr E} (z,z) = \frac{e^{2 \re(z)}}{(1+e^{2 \re(z)})^2}
\]

The following numerical approximation was obtained by Steven Finch
using Mathematica. It was independently checked by this author
using long double IEEE arithmetic. 
\[
\mathbb E_{f \in \mathscr E} (n_f(\mathscr D)) = 
\pi^{-1} \int_{\mathscr D} \ \omega = 
0.202,918,921,282 \cdots
.
\]

The inner product in $\mathscr P_d$ is invariant by the 
reversion operator $f(x) \mapsto x^d f(1/x)$. Hence the standard Gaussian
measure is also invariant, and therefore
$E_{f \in \mathscr P_d}(n_{\mathscr D}(f)) = 
\frac{1}{2} E_{f \in \mathscr P_d}(n_{\mathbb C}(f)) =
 d/2$.
Hence,
\[
\mathbb E_{f \in \mathscr E \mathscr P_d} (n_f(\mathscr D)) = 
\pi^{-1} \int_{\mathscr D} \ \omega  + \omega_{\mathscr P_d} = 
d/2+ 
0.202,918,921,282 \cdots
.
\]

\subsection{An $n$-dimensional example}

We consider now systems where each equation is of the form
\[
\sum f_{\mathbf a, \mathbf b} 
x_1^{a_1} x_2^{a_2} \cdots x_n^{a_n} e^{b_1 x_1 + \cdots + b_n x_n}
\]
and the sum is taken for all $0 \le a_i \le d$ and $b_i=0,1$.
The corresponding domain will be the polydisc $\mathscr D^n$.

The fewnomial space is
\[
\mathscr G = 
(\mathscr E \mathscr P_d) \otimes
(\mathscr E \mathscr P_d) \otimes
\cdots
\otimes
(\mathscr E \mathscr P_d) 
.
\]

Let $\omega = g(z) \ \frac{\sqrt{-1}}{2} \dd z \wedge \dd \bar z$ be
the Kähler form corresponding to $(\mathscr E \mathscr P_d)$.
Then from Th.\ref{th:aronszajn:1} and \eqref{fubini}, we deduce that
\[
\omega_{\mathscr G} = 
\sum_{i=1}^n
g(z_i) \ \frac{\sqrt{-1}}{2} \dd z_i \wedge \dd \bar z_i
.
\]

Hence,
\begin{eqnarray*}
\mathbb E_{f_1, \dots, f_n \in \mathscr G} (n_{\mathbf f}(\mathscr D^n)) 
&=& 
\pi^{-n} \int_{\mathscr D^n} \ \omega_{\mathscr G}^{\wedge n}
= 
n! \left(\pi^{-1} \int_{\mathscr D} \ \omega_{\mathscr E \mathscr P_d}\right)^n
\\
&=&
n!( d/2+  
0.202,918,921,282 \cdots)^n
.
\end{eqnarray*}

\subsection{An unmixed example}
We consider now the case where the first equation belongs to
$\mathscr G = (\mathscr E \mathscr P_{d_1})^{\otimes n}$ as above, 
but the other equations are polynomials of
degree $d_2, \cdots, d_n$ in each variable (they belong to
$\mathscr P_{d_j}^{\otimes n}$).

Then, let $\mathscr H = \mathscr G^{\lambda_1} \mathscr P_{d_2}^{\lambda_2}
\cdots \mathscr P_{d_n}^{\lambda_n}$.
Note that
\[
\mathscr H = \mathscr E^{\lambda_1} \mathscr P_1^{\lambda_1 d_1 + \cdots + \lambda_n d_n}
.\]
From the previous example,
\begin{eqnarray*}
\frac{1}{n!}\mathbb E_{f_1, \dots, f_n \in \mathscr H} (n_{\mathbf f}(\mathscr D^n)) = 
\hspace{-5em}&&
\hspace{+3.5em} 
\frac{1}{n!\pi^{n}} \int_{\mathscr D^n} \ \omega_{\mathscr H}^{\wedge n}
\\
&=&
\left(\pi^{-1} \int_{\mathscr D} \ \omega_{\mathscr E^{\lambda_1} 
\mathscr P_1^{ \lambda_1 d_1 + \cdots + \lambda_n d_n }}\right)^n
 \\
&=& 
\left( \frac{ \lambda_1 d_1 + \cdots + \lambda_n d_n}{2}
+ \lambda_1 
0.202,918,921,282 \cdots
\right)^n
.\end{eqnarray*}

The coefficient of $\lambda_1 \lambda_2 \cdots \lambda_n$ is
\[
n! 
\frac{d_1 d_2 \cdots d_n}{2^n}
+
(n-1)! 
\frac{d_2 \cdots d_n}{2^{n-1}}
0.202,918,921,282 \cdots
.\]
By Theorem~\ref{th:main}:
\[
\begin{split}
\mathbb E_{f_1 \in \mathscr G, f_2 \in \mathscr P_2, \cdots, f_n \in \mathscr P_n} n_{\mathscr D}(\mathbf f)
= \hspace{-5em} & \\
&=
n! 
\frac{d_1 d_2 \cdots d_n}{2^n}
+
(n-1)! 
\frac{d_2 \cdots d_n}{2^{n-1}}
0.202,918,921,282 \cdots
.
\end{split}
\]

\section{Acknowledgements}

I would like to thank three anonymous referees that 
provided valuable criticism and pointed out important
references.

\begin{bibsection}

\begin{biblist}
\bib{Armentano-Dedieu2009}{article}{
   author={Armentano, Diego},
   author={Dedieu, Jean-Pierre},
   title={A note about the average number of real roots of a Bernstein
   polynomial system},
   journal={J. Complexity},
   volume={25},
   date={2009},
   number={4},
   pages={339--342},
   issn={0885-064X},
   review={\MR{2542034 (2010k:60196)}},
   doi={10.1016/j.jco.2009.03.001},
}

\bib{Aronszajn}{article}{
   author={Aronszajn, N.},
   title={Theory of reproducing kernels},
   journal={Trans. Amer. Math. Soc.},
   volume={68},
   date={1950},
   pages={337--404},
   issn={0002-9947},
   review={\MR{0051437 (14,479c)}},
}

\bib{Azais-Wschebor}{book}{
   author={Aza{\"{\i}}s, Jean-Marc},
   author={Wschebor, Mario},
   title={Level sets and extrema of random processes and fields},
   publisher={John Wiley \& Sons Inc.},
   place={Hoboken, NJ},
   date={2009},
   pages={xii+393},
   isbn={978-0-470-40933-6},
   review={\MR{2478201 (2010m:60003)}},
   doi={10.1002/9780470434642},
}
	
\bib{Bernstein}{article}{
   author={Bernstein, D. N.},
   title={The number of roots of a system of equations},
   language={Russian},
   journal={Funkcional. Anal. i Prilo\v zen.},
   volume={9},
   date={1975},
   number={3},
   pages={1--4},
   issn={0374-1990},
   review={\MR{0435072 (55 \#8034)}},
}

\bib{BKK}{article}{
   author={Bernstein, D. N.},
   author={Ku{\v{s}}nirenko, A. G.},
   author={Hovanski{\u\i}, A. G.},
   title={Newton polyhedra},
   language={Russian},
   journal={Uspehi Mat. Nauk},
   volume={31},
   date={1976},
   number={3(189)},
   pages={201--202},
   issn={0042-1316},
   review={\MR{0492376 (58 \#11500)}},
}

\bib{BCSS}{book}{
   author={Blum, Lenore},
   author={Cucker, Felipe},
   author={Shub, Michael},
   author={Smale, Steve},
   title={Complexity and real computation},
   note={With a foreword by Richard M. Karp},
   publisher={Springer-Verlag},
   place={New York},
   date={1998},
   pages={xvi+453},
   isbn={0-387-98281-7},
   review={\MR{1479636 (99a:68070)}},
}

\bib{Brezis}{book}{
   author={Brezis, Ha{\"{\i}}m},
   title={Analyse fonctionnelle},
   language={French},
   series={Collection Math\'ematiques Appliqu\'ees pour la Ma\^\i trise.
   [Collection of Applied Mathematics for the Master's Degree]},
   note={Th\'eorie et applications. [Theory and applications]},
   publisher={Masson},
   place={Paris},
   date={1983},
   pages={xiv+234},
   isbn={2-225-77198-7},
   review={\MR{697382 (85a:46001)}},
}

\bib{CONDITIONNING}{book}{
author = {Peter Bürgisser},
author = {Felipe Cucker},
title = {Condition: The Geometry of Numerical Algorithms},
series = {Grundlehren der mathematischen Wissenschaften},
volume = {349},
publisher={Springer-Verlag},
date= {2013},
pages={583},
isbn = {3642388957},
}

\bib{CKMW1}{article}{
   author={Cucker, Felipe},
   author={Krick, Teresa},
   author={Malajovich, Gregorio},
   author={Wschebor, Mario},
   title={A numerical algorithm for zero counting. I. Complexity and
   accuracy},
   journal={J. Complexity},
   volume={24},
   date={2008},
   number={5-6},
   pages={582--605},
   issn={0885-064X},
   review={\MR{2467589 (2010d:68063)}},
   doi={10.1016/j.jco.2008.03.001},
}

\bib{CKMW2}{article}{
   author={Cucker, Felipe},
   author={Krick, Teresa},
   author={Malajovich, Gregorio},
   author={Wschebor, Mario},
   title={A numerical algorithm for zero counting. II. Distance to
   ill-posedness and smoothed analysis},
   journal={J. Fixed Point Theory Appl.},
   volume={6},
   date={2009},
   number={2},
   pages={285--294},
   issn={1661-7738},
   review={\MR{2580979 (2011c:65317)}},
   doi={10.1007/s11784-009-0127-4},
}

\bib{CKMW3}{article}{
   author={Cucker, Felipe},
   author={Krick, Teresa},
   author={Malajovich, Gregorio},
   author={Wschebor, Mario},
   title={A numerical algorithm for zero counting. III: Randomization and
   condition},
   journal={Adv. in Appl. Math.},
   volume={48},
   date={2012},
   number={1},
   pages={215--248},
   issn={0196-8858},
   review={\MR{2845516}},
   doi={10.1016/j.aam.2011.07.001},
}

\bib{Dedieu-Malajovich2008}{article}{
   author={Dedieu, Jean-Pierre},
   author={Malajovich, Gregorio},
   title={On the number of minima of a random polynomial},
   journal={J. Complexity},
   volume={24},
   date={2008},
   number={2},
   pages={89--108},
   issn={0885-064X},
   review={\MR{2400310 (2009j:26015)}},
   doi={10.1016/j.jco.2007.09.003},
}

\bib{GROMOV}{article}{
   author={Gromov, M.},
   title={Convex sets and K\"ahler manifolds},
   conference={
      title={Advances in differential geometry and topology},
   },
   book={
      publisher={World Sci. Publ., Teaneck, NJ},
   },
   date={1990},
   pages={1--38},
   review={\MR{1095529 (92d:52018)}},
}

\bib{Kac1943}{article}{
   author={Kac, M.},
   title={On the average number of real roots of a random algebraic
   equation},
   journal={Bull. Amer. Math. Soc.},
   volume={49},
   date={1943},
   pages={314--320},
   issn={0002-9904},
   review={\MR{0007812 (4,196d)}},
}
\bib{Kac1949}{article}{
   author={Kac, M.},
   title={On the average number of real roots of a random algebraic
   equation. II},
   journal={Proc. London Math. Soc. (2)},
   volume={50},
   date={1949},
   pages={390--408},
   issn={0024-6115},
   review={\MR{0030713 (11,40e)}},
}

\bib{KKh2010}{article}{
   author={Kaveh, Kiumars},
   author={Khovanskii, A. G.},
   title={Mixed volume and an extension of intersection theory of divisors},
   language={English, with English and Russian summaries},
   journal={Mosc. Math. J.},
   volume={10},
   date={2010},
   number={2},
   pages={343--375, 479},
   issn={1609-3321},
   review={\MR{2722802 (2012a:14014)}},
}
\bib{KKh2012}{article}{
   author={Kaveh, Kiumars},
   author={Khovanskii, A. G.},
   title={Newton-Okounkov bodies, semigroups of integral points, graded
   algebras and intersection theory},
   journal={Ann. of Math. (2)},
   volume={176},
   date={2012},
   number={2},
   pages={925--978},
   issn={0003-486X},
   review={\MR{2950767}},
   doi={10.4007/annals.2012.176.2.5},
}

\bib{Kazarnovskii1984}{article}{
   author={Kazarnovski{\u\i}, B. Ya.},
   title={Newton polyhedra and roots of systems of exponential sums},
   language={Russian},
   journal={Funktsional. Anal. i Prilozhen.},
   volume={18},
   date={1984},
   number={4},
   pages={40--49, 96},
   issn={0374-1990},
   review={\MR{775932 (87b:32005)}},
}

\bib{Kazarnovskii2004}{article}{
   author={Kazarnovski{\u\i}, B. Ya.},
   title={``Newton polyhedra'' of generalized functions},
   language={Russian, with Russian summary},
   journal={Izv. Ross. Akad. Nauk Ser. Mat.},
   volume={68},
   date={2004},
   number={2},
   pages={53--70},
   issn={0373-2436},
   translation={
      journal={Izv. Math.},
      volume={68},
      date={2004},
      number={2},
      pages={273--289},
      issn={1064-5632},
   },
   review={\MR{2058000 (2005c:30006)}},
   doi={10.1070/IM2004v068n02ABEH000475},
}
		
\bib{Krantz}{book}{
   author={Krantz, Steven G.},
   title={Function theory of several complex variables},
   note={Reprint of the 1992 edition},
   publisher={AMS Chelsea Publishing, Providence, RI},
   date={2001},
   pages={xvi+564},
   isbn={0-8218-2724-3},
   review={\MR{1846625 (2002e:32001)}},
}

\bib{HKPV}{book}{
   author={Hough, J. Ben},
   author={Krishnapur, Manjunath},
   author={Peres, Yuval},
   author={Vir{\'a}g, B{\'a}lint},
   title={Zeros of Gaussian analytic functions and determinantal point
   processes},
   series={University Lecture Series},
   volume={51},
   publisher={American Mathematical Society},
   place={Providence, RI},
   date={2009},
   pages={x+154},
   isbn={978-0-8218-4373-4},
   review={\MR{2552864 (2011f:60090)}},
}

\bib{Kushnirenko}{article}{
   author={Ku{\v{s}}nirenko, A. G.},
   title={Newton polyhedra and Bezout's theorem},
   language={Russian},
   journal={Funkcional. Anal. i Prilo\v zen.},
   volume={10},
   date={1976},
   number={3, 82--83.},
   issn={0374-1990},
   review={\MR{0422272 (54 \#10263)}},
}

\bib{Littlewood-Offord1943}{article}{
   author={Littlewood, J. E.},
   author={Offord, A. C.},
   title={On the number of real roots of a random algebraic equation. III},
   language={English, with Russian summary},
   journal={Rec. Math. [Mat. Sbornik] N.S.},
   volume={12(54)},
   date={1943},
   pages={277--286},
   review={\MR{0009656 (5,179h)}},
}

\bib{Littlewood-Offord1945}{article}{
   author={Littlewood, J. E.},
   author={Offord, A. C.},
   title={On the distribution of the zeros and $a$-values of a random
   integral function. I},
   journal={J. London Math. Soc.},
   volume={20},
   date={1945},
   pages={130--136},
   issn={0024-6107},
   review={\MR{0019123 (8,372d)}},
}

\bib{NONLINEAR-EQUATIONS}{book}
  {
  author={Gregorio Malajovich},
  title={Nonlinear Equations},
  series={Publicações de Matemática, 
  28º Colóquio Brasileiro de Matemática},
  publisher={IMPA},
  place={Rio de Janeiro},
  date={2011},
  pages={xiv+177},
  isbn={978-85-244-329-3}}

\bib{Malajovich-Rojas2002}{article}{
   author={Malajovich, Gregorio},
   author={Rojas, J. Maurice},
   title={Polynomial systems and the momentum map},
   conference={
      title={Foundations of computational mathematics},
      address={Hong Kong},
      date={2000},
   },
   book={
      publisher={World Sci. Publ., River Edge, NJ},
   },
   date={2002},
   pages={251--266},
   review={\MR{2021984 (2004k:65090)}},
}

\bib{Malajovich-Rojas2004}{article}{
   author={Malajovich, Gregorio},
   author={Rojas, J. Maurice},
   title={High probability analysis of the condition number of sparse
   polynomial systems},
   journal={Theoret. Comput. Sci.},
   volume={315},
   date={2004},
   number={2-3},
   pages={524--555},
   issn={0304-3975},
   review={\MR{2073064 (2005e:34166)}},
   doi={10.1016/j.tcs.2004.01.006},
}
	
\bib{Minding}{article}{
   author={Minding, Ferdinand},
   title={On the determination of the degree of an equation obtained by
   elimination},
   note={Translated from the German (Crelle, 1841)and with a commentary by D. Cox and J.
   M. Rojas},
   conference={
      title={Topics in algebraic geometry and geometric modeling},
   },
   book={
      series={Contemp. Math.},
      volume={334},
      publisher={Amer. Math. Soc.},
      place={Providence, RI},
   },
   date={2003},
   pages={351--362},
   review={\MR{2039981 (2004m:01022)}},
}

\bib{Nazarov-Sodin}{article}{
   author={Nazarov, Fedor},
   author={Sodin, Mikhail},
   title={Random complex zeroes and random nodal lines},
   conference={
      title={Proceedings of the International Congress of Mathematicians.
      Volume III},
   },
   book={
      publisher={Hindustan Book Agency},
      place={New Delhi},
   },
   date={2010},
   pages={1450--1484},
   review={\MR{2827851 (2012h:60177)}},
}

\bib{Okounkov}{article}{
   author={Okounkov, Andrei},
   title={Brunn-Minkowski inequality for multiplicities},
   journal={Invent. Math.},
   volume={125},
   date={1996},
   number={3},
   pages={405--411},
   issn={0020-9910},
   review={\MR{1400312 (99a:58074)}},
   doi={10.1007/s002220050081},
}

\bib{Peres-Virag}{article}{
   author={Peres, Yuval},
   author={Vir{\'a}g, B{\'a}lint},
   title={Zeros of the i.i.d.\ Gaussian power series: a conformally
   invariant determinantal process},
   journal={Acta Math.},
   volume={194},
   date={2005},
   number={1},
   pages={1--35},
   issn={0001-5962},
   review={\MR{2231337 (2007m:60150)}},
   doi={10.1007/BF02392515},
}
	
\bib{Rojas}{article}{
   author={Rojas, J. Maurice},
   title={On the average number of real roots of certain random sparse
   polynomial systems},
   conference={
      title={The mathematics of numerical analysis},
      address={Park City, UT},
      date={1995},
   },
   book={
      series={Lectures in Appl. Math.},
      volume={32},
      publisher={Amer. Math. Soc.},
      place={Providence, RI},
   },
   date={1996},
   pages={689--699},
   review={\MR{1421361 (97j:14060)}},
}
		
\bib{Shiffman-Zelditch}{article}{
   author={Shiffman, Bernard},
   author={Zelditch, Steve},
   title={Random polynomials with prescribed Newton polytope},
   journal={J. Amer. Math. Soc.},
   volume={17},
   date={2004},
   number={1},
   pages={49--108 (electronic)},
   issn={0894-0347},
   review={\MR{2015330 (2005e:60032)}},
   doi={10.1090/S0894-0347-03-00437-5},
}

\bib{Bezout1}{article}{
   author={Shub, Michael},
   author={Smale, Steve},
   title={Complexity of B\'ezout's theorem. I. Geometric aspects},
   journal={J. Amer. Math. Soc.},
   volume={6},
   date={1993},
   number={2},
   pages={459--501},
   issn={0894-0347},
   review={\MR{1175980 (93k:65045)}},
   doi={10.2307/2152805},
}

\bib{Bezout2}{article}{
   author={Shub, M.},
   author={Smale, S.},
   title={Complexity of Bezout's theorem. II. Volumes and probabilities},
   conference={
      title={Computational algebraic geometry},
      address={Nice},
      date={1992},
   },
   book={
      series={Progr. Math.},
      volume={109},
      publisher={Birkh\"auser Boston},
      place={Boston, MA},
   },
   date={1993},
   pages={267--285},
   review={\MR{1230872 (94m:68086)}},
}

\bib{Bezout3}{article}{
   author={Shub, Michael},
   author={Smale, Steve},
   title={Complexity of Bezout's theorem. III. Condition number and packing},
   note={Festschrift for Joseph F. Traub, Part I},
   journal={J. Complexity},
   volume={9},
   date={1993},
   number={1},
   pages={4--14},
   issn={0885-064X},
   review={\MR{1213484 (94g:65152)}},
   doi={10.1006/jcom.1993.1002},
}

\bib{Bezout4}{article}{
   author={Shub, Michael},
   author={Smale, Steve},
   title={Complexity of Bezout's theorem. IV. Probability of success;
   extensions},
   journal={SIAM J. Numer. Anal.},
   volume={33},
   date={1996},
   number={1},
   pages={128--148},
   issn={0036-1429},
   review={\MR{1377247 (97k:65310)}},
   doi={10.1137/0733008},
}

\bib{Bezout5}{article}{
   author={Shub, M.},
   author={Smale, S.},
   title={Complexity of Bezout's theorem. V. Polynomial time},
   note={Selected papers of the Workshop on Continuous Algorithms and
   Complexity (Barcelona, 1993)},
   journal={Theoret. Comput. Sci.},
   volume={133},
   date={1994},
   number={1},
   pages={141--164},
   issn={0304-3975},
   review={\MR{1294430 (96d:65091)}},
   doi={10.1016/0304-3975(94)90122-8},
}

\bib{Sodin}{article}{
   author={Sodin, M.},
   title={Zeros of Gaussian analytic functions},
   journal={Math. Res. Lett.},
   volume={7},
   date={2000},
   number={4},
   pages={371--381},
   issn={1073-2780},
   review={\MR{1783614 (2002d:32030)}},
}

\bib{Sodin-Tsirelson}{article}{
   author={Sodin, Mikhail},
   author={Tsirelson, Boris},
   title={Random complex zeroes. I. Asymptotic normality},
   journal={Israel J. Math.},
   volume={144},
   date={2004},
   pages={125--149},
   issn={0021-2172},
   review={\MR{2121537 (2005k:60079)}},
   doi={10.1007/BF02984409},
}

\bib{Weyl}{book}{
author = {Hermann Weyl},
title = {The theory of groups and quantum mechanics},
publisher={Dover Publications},
year = {1949},
place = {New York},
note = {XVII+422 pp.}}

\end{biblist}
\end{bibsection}

\end{document}